\theoremstyle{plain}
\newtheorem{theorem}{Theorem}[section]
\newtheorem{proposition}[theorem]{Proposition}
\newtheorem{lemma}[theorem]{Lemma}
\theoremstyle{remark}
\newtheorem{remark}[theorem]{Remark}
\numberwithin{equation}{section}
\newcommand{\R}{\mathbb{R}}
\renewcommand{\Re}{\operatorname{Re}}
\newcommand{\I}{\infty}
\newcommand{\Jbr}[1]{\langle #1 \rangle}
\newcommand{\JBR}[1]{\left\langle #1 \right\rangle}
\newcommand{\norm}[1]{\left\lVert #1\right\rVert}
\newcommand{\tnorm}[1]{\lVert #1\rVert}
\def\({\left(}
\def\){\right)}
\def\<{\left\langle}
\def\>{\right\rangle}
\def\le{\leqslant}
\def\ge{\geqslant}
\def\l{\lambda}
\newcommand{\eps}{\varepsilon}
\newcommand{\rre}{\mathbb{R}}
\newcommand{\pt}{\partial}
\begin{document}

\title[Critical  
nonlinear Klein-Gordon equation in 3D]
{Modified scattering for the Klein-Gordon\\
equation with the critical nonlinearity\\
in three dimensions}
\author[S.Masaki and J.Segata]
{Satoshi Masaki and Jun-ichi Segata}

\address{Department systems innovation\\
Graduate school of Engineering Science\\
Osaka University\\
Toyonaka Osaka, 560-8531, Japan}
\email{masaki@sigmath.es.oasaka-u.ac.jp}

\address{Mathematical Institute, Tohoku University\\
6-3, Aoba, Aramaki, Aoba-ku, Sendai 980-8578, Japan}
\email{segata@m.tohoku.ac.jp}

\subjclass[2000]{Primary 35L71; Secondary 35B40, 81Q05}

\keywords{scattering problem}



\begin{abstract}
In this paper, we consider the final state problem  
for the nonlinear Klein-Gordon equation 
(NLKG) with a critical nonlinearity in three  
space dimensions:
$(\Box+1)u=\lambda|u|^{2/3}u$, $t\in\rre$, $x\in\rre^{3}$,
where $\Box=\pt_{t}^{2}-\Delta$ is d'Alembertian. 
We prove that for a given asymptotic profile $u_{\mathrm{ap}}$, 
there exists a solution $u$ to (NLKG) 
which converges to $u_{\mathrm{ap}}$ as $t\to\infty$.
Here the asymptotic profile $u_{\mathrm{ap}}$ is given by the leading term of the solution 
to the linear Klein-Gordon equation with a logarithmic phase correction.
Construction of a suitable approximate 
solution is based on the combination of Fourier series expansion for 
the nonlinearity used in our previous paper \cite{MS2} and smooth modification of 
phase correction by Ginibre-Ozawa \cite{GO}.
\end{abstract}

\maketitle

\section{Introduction}
This paper is devoted to the study of the final state problem 
for the nonlinear Klein-Gordon equation with a critical nonlinearity in three space dimensions:
\begin{equation}
\left\{
\begin{array}{l}
\displaystyle{(\Box+1)u
=\lambda|u|^{2/3}u
\qquad t\in\rre,\ x\in\rre^{3},}\\
\displaystyle{
u-u_{\mathrm{ap}}\to0\qquad\text{in}\ L^{2}\ \ \text{as}\ t\to+\infty,}
\end{array}
\right. \label{K}
\end{equation}
where $\Box=\pt_{t}^{2}-\Delta$ is d'Alembertian, 
$u:\rre\times\rre^{3}\to\rre$ is an unknown
function, $u_{\mathrm{ap}}:\rre\times\rre^{3}\to\rre$ is a given function, 
and $\lambda$ is a non-zero real constant. 
The aim of this paper is to find a proper choice of the function $u_{\mathrm{ap}}$ so that
the equation \eqref{K} admits a nontrivial solution.
In other words, we want to determine a ``right'' asymptotic behavior which actually takes place.
This is a continuation of our previous study of the two dimensional case in \cite{MS3}.

Let us briefly review the known results on the 
global existence and long time behavior of solution 
to the more general nonlinear Klein-Gordon equation
\begin{equation}
(\Box+1)u=\lambda|u|^{p-1}u,\qquad t\in\rre,\ x\in\rre^{d},
\label{KL}
\end{equation}
where $p>1$ and $\lambda\in\rre\backslash\{0\}$. 
Since the point-wise decay of solution to the linear 
Klein-Gordon equation is $O(t^{-d/2})$ as $t\to\infty$, 
the linear scattering theory indicates that 
the power $p=1+2/d$ will be a borderline between 
the short and long range scattering 
theories. This formal observation was firstly 
justified by 
Glassey \cite{G}, Matsumura \cite{Ma} and Georgiev 
and Yordanov \cite{GY} for $p\le1+2/d$. 
More precisely, they proved that solutions to (\ref{KL}) 
do not scatter to the solution to the linear Klein-Gordon 
equation if $1<p\le1+2/d$. 
Later, Georgiev and Lecente \cite{GL} 
obtained a point-wise decay estimates 
for small solutions to the (\ref{KL}) for $p>1+2/d$ with $d=1,2,3$ 
by using the vector field approach by 
Klainerman \cite{Kla}. 
Furhtermore, Hayashi and Naumkin \cite{HN4} proved that 
small solutions to (\ref{KL}) scatter to the solution to 
the linear Klein-Gordon equation if $p>1+2/d$ and $d=1,2$.  
Notice that it is an still open problem for the asymptotic behavior 
of small solution to (\ref{KL}) when $p$ is close to $1+2/d$ and 
$n\ge3$. See \cite{HN6,Kla,P1,P2,Shatah,St} for the small data scattering when 
$n\ge3$ and $p$ is large. 

For the critical case $p=1+2/d$ and $d=1$,  
Georgiev and Yordanov \cite{GY} studied the 
point-wise decay of a solution to the initial value problem. 
Delort \cite{D} obtained 
an asymptotic profile of a global solution to the equation.  
His proof is based on hyperbolic coordinates and 
the compactness of the support of the initial data was assumed. 
See also Lindblad and Soffer \cite{LS2} for the alternative proof 
of \cite{D}. 
The compact support assumption in \cite{D} was later
removed by Hayashi and Naumkin in \cite{HN5} by 
using the vector field approach. 


Recently, 
the authors \cite{MS3} consider (\ref{KL}) with $p=1+2/d$ and $d=2$ and
specify an asymptotic profile $u_{\mathrm{ap}}$ that allows
a unique solution $u$ which converges to $u_{\mathrm{ap}}$ as $t\to\infty$.
The asymptotic profile $u_{\mathrm{ap}}$ has the same form as in the $d=1$ case.
Namely, it is the leading term of the solution 
to the linear Klein-Gordon equation with a logarithmic phase correction.
The key ingredient is to extract a resonance term by means of Fourier series expansion of the nonlinearity.
In this paper, we consider \eqref{K}, that is,
a similar final value problem for (\ref{KL}) with $p=1+2/d$ and $d=3$. 
Because the power becomes a fractional number, the argument in the two dimensional case \cite{MS3}
is not directly applicable.
To deal with the nonlinearity, we use the argument in Ginibre-Ozawa \cite{GO}.


\medskip 
Let us introduce the asymptotic profile $u_{\mathrm{ap}}$ which we work with.
To this end, we first recall that 
the leading term of a solution to the linear Klein-Gordon equation 
\begin{equation*}
\left\{
\begin{array}{l}
\displaystyle{(\Box+1)v
=0
\qquad t\in\rre,\ x\in\rre^{3},}\\
\displaystyle{
v(0,x)=\phi_{0}(x),\quad \pt_{t}v(0,x)=\phi_{1}(x)\qquad x\in\rre^{3}}
\end{array}
\right. 
\end{equation*}
is given by 
\[
	t^{-\frac32} {\bf 1}_{\{|x|<t\}}(t,x) \Jbr{\mu}^\frac32 \rho(\mu) \Re e^{i (\Jbr{\mu}^{-1} t +  \beta(\mu))},
\]
where $\mu=\mu(t,x):=x/\sqrt{t^{2}-|x|^{2}}$, ${\bf 1}_{\Omega}(t,x)$ 
is the characteristic function supported on
$\Omega\subset\rre^{1+3}$, 
and 
$\rho\ge 0$ and $\beta \in [0,2\pi)$ are given by the relation
\[
	\rho(\mu)e^{i\beta(\mu)} = e^{-i\frac\pi4}(\Jbr{\mu}\hat{\phi}_{0}(\mu)- i\hat{\phi}_1(\mu)),
\]
see \cite{H} for instance.

For given final state $(\phi_{0},\phi_{1})$, we define the asymptotic 
profile $u_{\mathrm{ap}}$ by
\begin{equation}\label{up}
	u_{\mathrm{ap}}(t,x):=
	t^{-\frac32} {\bf 1}_{\{|x|<t\}}(t,x) \Jbr{\mu}^\frac32 \rho(\mu) \Re e^{i (\Jbr{\mu}^{-1} t + 
	\Psi(\mu)\log t + \beta(\mu))},
\end{equation}
where the phase correction term is given by
\begin{equation}
\Psi(\mu)=-  \frac{\l \Gamma(\frac{11}6) }{\sqrt\pi 
	\Gamma(\frac{7}3)} \rho(\mu)^{\frac23}.
\label{phase}
\end{equation}
Remark that the coefficient comes from the first Fourier-cosine coefficient of a $2\pi$-periodic function $|\cos \theta|^{2/3} \cos \theta$.
The final state $(\phi_0,\phi_1)$ is taken from the function space $Y$ defined by 
\begin{align*}
Y:={}&\{(\phi_{0},\phi_{1})\in{{\mathcal S}}'(\rre^3)\times{{\mathcal S}}'(\rre^3);
\|(\phi_{0},\phi_{1})\|_{Y}<\infty\},\\
\|(\phi_{0},\phi_{1})\|_{Y}
:={}&\|\phi_{0}\|_{H_{x}^{2}}+\|x\phi_{0}\|_{H_{x}^{3}}+\|x^{2}\phi_{0}\|_{H_{x}^{4}}\\
& +\|\phi_{1}\|_{H_{x}^{1}}+\|x\phi_{1}\|_{H_{x}^{2}}+\|x^{2}\phi_{1}\|_{H_{x}^{3}}.
\end{align*} 

The main result in this paper is as follows. 

\begin{theorem} \label{thm:main} 
Let $(\phi_{0},\phi_{1})\in Y$. 
For $3/4<\gamma<5/6$,
there exists  
$\varepsilon=\varepsilon(\gamma)>0$ such that 
if $\norm{\Jbr{\cdot}^{3/2} \rho}_{L^\I_\mu}<\varepsilon$
then there exist $T\ge 3$ and a unique solution $u(t)$ for
the equation \eqref{K} satisfying
\begin{gather}
u \in C([T,\infty);H^{\frac12}_x), \notag \\
\sup_{t \ge T} t^{\gamma} \(\|u-u_{\mathrm{ap}}\|_{L^{\infty}((t,\infty);H_x^{\frac12})}
+ \|u-u_{\mathrm{ap}}\|_{L^{\frac{10}3}((t,\infty);L_x^{\frac{10}3})} \)
<\infty,
\label{eq:conv}
\end{gather}
where the asymptotic profile $u_{\mathrm{ap}}$ 
is defined from $(\phi_0,\phi_1)$ via \eqref{up}.
\end{theorem}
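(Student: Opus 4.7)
My approach would be a fixed-point construction at infinity: set $u=u_{\mathrm{ap}}+w$ and produce $w$ by solving the Duhamel integral equation
\[
w(t) = -\int_t^\I \frac{\sin((t-s)\Jbr{D})}{\Jbr{D}}\bigl(\lambda N(u_{\mathrm{ap}}+w)-\lambda N(u_{\mathrm{ap}})-E\bigr)\,ds,
\]
with $N(u)=|u|^{2/3}u$ and error $E:=(\Box+1)u_{\mathrm{ap}}-\lambda N(u_{\mathrm{ap}})$. The natural workspace would be the weighted Strichartz ball
\[
X_\gamma:=\bigl\{w\in C([T,\I);H^{1/2}_x):\|w\|_{X_\gamma}:=\sup_{t\ge T}t^\gamma(\|w\|_{L^\I((t,\I);H^{1/2}_x)}+\|w\|_{L^{10/3}((t,\I);L^{10/3}_x)})\le M\bigr\}
\]
for large $T$ and small $M$. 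Standard Klein--Gordon Strichartz estimates for the admissible pair $(10/3,10/3)$, together with the Hölder bound $|N(a)-N(b)|\le C(|a|+|b|)^{2/3}|a-b|$ and a profile estimate $\|u_{\mathrm{ap}}\|_{L^{10/3}((t,\I);L^{10/3}_x)}\lesssim \eps t^{-3/10}$, would close the contraction once $E$ is shown to satisfy a bound of order $t^{-\gamma}$ in the dual Strichartz norm.

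\medskip

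\textbf{Error analysis.} The central step is the bound on $E$. On the cone $\{|x|<t\}$ I would write $u_{\mathrm{ap}}=A\cos\Theta$ with amplitude $A(t,\mu)=t^{-3/2}\Jbr{\mu}^{3/2}\rho(\mu)$ and phase $\Theta(t,\mu)=\Jbr{\mu}^{-1}t+\Psi(\mu)\log t+\beta(\mu)$. Direct computation of $(\Box+1)u_{\mathrm{ap}}$ in the hyperbolic coordinate $\mu$ produces a leading term proportional to $2\partial_t\Theta\cdot A\sin\Theta$, which contains the $t^{-1}\Psi(\mu)$ correction induced by the logarithmic phase. For the nonlinearity I would expand
\[
|\cos\Theta|^{2/3}\cos\Theta=\sum_{k=0}^\I c_k\cos((2k+1)\Theta),\qquad c_0=\frac{\Gamma(11/6)}{\sqrt\pi\,\Gamma(7/3)},
\]
separating the resonant mode $k=0$ from the non-resonant modes $k\ge 1$. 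The resonant term is arranged to cancel the $t^{-1}\Psi$ contribution from $(\Box+1)u_{\mathrm{ap}}$ thanks to the very definition \eqref{phase} of $\Psi$. For each $k\ge 1$, the total phase $(2k+1)\Theta$ has non-degenerate $s$-derivative $\sim(2k+1)\Jbr{\mu}^{-1}$, so integrating by parts once against the Duhamel kernel would gain a factor $s^{-1}$; summing over $k$ with the polynomial decay of $c_k$ should deliver the required $O(t^{-\gamma})$ bound provided $\gamma<5/6$.

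\medskip

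\textbf{Main obstacle.} The hardest part will be exactly this error estimate, because $N$ is only Hölder of order $5/3$: the Fourier-series reduction of $N(u_{\mathrm{ap}})$ to smooth trigonometric modes is only legitimate where the amplitude $A$ does not vanish, and derivatives of $\Psi(\mu)=-\lambda\pi^{-1/2}\Gamma(11/6)\Gamma(7/3)^{-1}\rho(\mu)^{2/3}$ are singular at zeros of $\rho$. To overcome this I would follow Ginibre--Ozawa \cite{GO} and smoothly modify the phase correction in $\mu$ (typically by mollifying $\rho^{2/3}$ at a scale shrinking in $t$), which introduces a controllable additional error. Combined with the Fourier-series device of \cite{MS2,MS3}, this should produce the needed bound on $E$. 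The admissible range $3/4<\gamma<5/6$ then reflects two competing constraints: the upper bound $\gamma<5/6$ comes from the non-resonant error integration above, while the lower bound $\gamma>3/4$ arises from balancing the $w$-quadratic and $u_{\mathrm{ap}}^{2/3}w$ pieces of the nonlinear difference in the contraction estimate. Uniqueness in the class specified by \eqref{eq:conv} follows at once from the contraction.
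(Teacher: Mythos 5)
You correctly identify every major ingredient the paper uses: reduction to a Yang--Feldman fixed point in a weighted Strichartz space, Fourier-cosine expansion of $|\cos\Theta|^{2/3}\cos\Theta$ to isolate the resonant $n=1$ mode (which the choice \eqref{phase} of $\Psi$ then cancels), and a Ginibre--Ozawa-type smooth modification of $\rho^{2/3}$ to avoid the singular derivatives at zeros of $\rho$. The two constraints on $\gamma$ are read off for the right reasons. The proposal is essentially correct and convergent with the paper's strategy, with two genuine methodological differences worth noting.

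First, you treat the non-resonant modes $n\ge 3$ by integration by parts in time against the Duhamel kernel. The paper does the same thing at the differential level instead: it adds an explicit corrector $\widetilde v_{\mathrm{ap}}=\sum_{n\ge 2} v_n$ with $v_n\sim -\frac{\lambda c_n}{n^2-1}t^{-5/2}\Jbr{\mu}^{5/2}\widetilde\rho^{2/3}\rho\,\Re e^{in(\alpha+\beta)}$ and estimates $(\Box+1)v_n-N_n$ directly (Proposition \ref{prop:mainest2}). Since $e^{in\Jbr{\mu}^{-1}t}$ is an approximate $(\Box+1)$-eigenfunction with eigenvalue $-(n^2-1)$ (Lemma \ref{lem:mainest0}), the $\frac{1}{n^2-1}$ normalization plays the role of your inverse phase derivative. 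The advantage of the paper's formulation is that the fixed-point machinery (Proposition \ref{prop:FVP}) then sees only a profile $A$ with a good $L^2$ bound on $(\Box+1)A-N(A)$, and no stationary-phase analysis of the half-wave Duhamel kernel $\sin((t-\tau)\Jbr{D})/\Jbr{D}$ mixed with the Lorentz-boost phase $\Jbr{\mu(\tau,x)}^{-1}\tau$ is needed; your IBP route would have to track how differentiating the kernel and the $\mu$-dependence of the phase interact, which is strictly more delicate than the pointwise estimate of $(\Box+1)v_n-N_n$.

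Second, the phase modification is not mollification of $\rho^{2/3}$ in $\mu$ at a $t$-shrinking scale, but rather the algebraic desingularization $\widetilde\rho(s,\mu)=\sqrt{\rho(\mu)^2+s^{-1}\Jbr{\mu}^{-3}}$, i.e.\ adding a decaying floor inside the square before taking the $2/3$ power. This gives the explicit derivative bounds of Lemmas \ref{lem:mainest1_pf2}--\ref{lem:mainest1_pf3} (involving $\widetilde\rho^{-4/3}$, $\widetilde\rho^{-10/3}$, etc.) that drive the $t^{-11/6}\log t$ error rate; with genuine $\mu$-mollification the error bookkeeping would look quite different. Finally, you should make explicit (it is only implicit in your plan) the step verifying that the modified profile $A=\widetilde u_{\mathrm{ap}}+\widetilde v_{\mathrm{ap}}$ is $O(t^{-5/6}\log t)$-close to $u_{\mathrm{ap}}$ in the $H^{1/2}\cap L^{10/3}$ norms: this is the paper's \eqref{eq:A3}, and it is exactly what fixes the upper bound $\gamma<5/6$.
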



\begin{remark}
The same result holds true for equations with 
a general critical nonlinearity $F(u):\R \to \R$ satisfying
$F(\l u)=\l^{5/3} F(u)$ for all $\lambda>0$ and $u\in\R$.
See Remark \ref{rem:general} below for the detail.
\end{remark}

\begin{remark}
Concerning the scattering results for the 
Klein-Gordon equation with the critical quasilinear 
nonlinearity, 
the readers can consult Moriyama \cite{M}, Katayama \cite{Ka}, 
Sunagawa \cite{S} for one dimensional 
cubic case and Ozawa, Tsutaya and Tsutsumi \cite{OTT}, 
Delort, Fang and Xue \cite{DFX}, Kawahara and Sunagawa \cite{KS}, 
Katayama, Ozawa and Sunagawa \cite{KOS} for the two dimensional 
quadratic case. 
\end{remark}

The rest of the paper is organized as follows. 
In Section 2, we exhibit the outline of 
the proof of Theorem \ref{thm:main}.
We construct a solution with the desired property by applying 
the contraction principle to
 the integral equation of Yang-Feldman type associated with 
(\ref{K}) around a suitable 
approximate solution. 
The crucial points of the proof are summarized as Propositions \ref{prop:FVP}
and \ref{prop:mainest0}.
Then, we prove Proposition \ref{prop:FVP} in Section \ref{sec:FVP}
and Proposition \ref{prop:mainest0} in Section \ref{sec:prf}



\section{Outline of the proof of Theorem \ref{thm:main}
} 

In this section, we give an outline of the proof of Theorem~\ref{thm:main}.

\subsection{On the solvability of the final state problem}
We first remark that solvability of \eqref{K} is reduced to the
appropriateness of the choice of the asymptotic behavior $u_{\mathrm{ap}}$.

Let $A(t,x)$ be a given asymptotic profile of a solution to (\ref{K}).
We show that if $A(t,x)$ is well-chosen then we obtain a solution which asymptotically behaves like $A(t,x)$.
Let $N(u)=\lambda|u|^{2/3}u$. 
\begin{proposition} \label{prop:FVP}
For any $\gamma>3/4$, there exists $\eta=\eta(\gamma)>0$ such that if 
$A(t,x)$ satisfies
\begin{gather}
\sup_{t\ge T_0} t^{\frac32} \|A(t)\|_{L^{\infty}_x} \le \eta ,
\label{eq:A1}\\
\begin{split}
\sup_{t\ge T_0} t^{1+\gamma} \|(\Box+1)A(t) -N(A(t))\|_{L^2_x}< \I
\label{eq:A2}
\end{split}
\end{gather}
for some $T_0\ge 3$,
then there exist $T\ge T_0$ and 
 a unique solution $u \in C([T,\infty);L^2_x)$
for the equation (\ref{K}) satisfying
\begin{equation}
\sup_{t \ge T} t^{\gamma} \left(\|u-A\|_{L^{\infty}((t,\infty);H^{\frac12}_x)}
+\|u-A\|_{L^{\frac{10}{3}}((t,\infty);L^{\frac{10}{3}}_x)}\right) <\infty
\label{pr}
\end{equation}
for the same $\gamma$.
\end{proposition}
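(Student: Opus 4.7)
The plan is to linearize around $A$. Setting $w := u-A$, the perturbation satisfies
\[
(\Box+1) w = \bigl(N(A+w) - N(A)\bigr) - R, \qquad R := (\Box+1)A - N(A),
\]
with the final-state condition $w \to 0$ as $t \to \I$. Integrating Duhamel's formula from $+\I$ recasts the problem as the fixed-point equation $w = \mathcal{T}(w)$, where
\[
\mathcal{T}(w)(t) := -\int_t^{\I} \frac{\sin((s-t)\Jbr{D})}{\Jbr{D}} \bigl(N(A+w)(s) - N(A)(s) - R(s)\bigr)\, ds.
\]
I would solve this by Banach's fixed-point theorem on a small ball of the space $X_T$ whose norm equals exactly the quantity appearing in \eqref{pr}:
\[
\|w\|_{X_T} := \sup_{t\ge T} t^{\gamma} \Bigl( \|w\|_{L^\I((t,\I); H^{1/2}_x)} + \|w\|_{L^{10/3}((t,\I); L^{10/3}_x)} \Bigr),
\]
with $T = T(\gamma, \eta)$ chosen large enough; uniqueness in the stated class then follows directly from the contraction.

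Two standard ingredients control $\|\mathcal{T}(w)\|_{X_T}$. First, the $\Jbr{D}^{-1}$ factor in the Duhamel kernel yields a one-derivative gain in the energy inequality: since $\Jbr{D}\mathcal{T}(w)(t) = -\int_t^{\I} \sin((s-t)\Jbr{D}) F(s)\, ds$ with $F := N(A+w) - N(A) - R$, and $\sin((s-t)\Jbr{D})$ is an $L^2$-isometry, one has $\|\mathcal{T}(w)\|_{L^{\I}((t,\I); H^{1})} \lesssim \|F\|_{L^1((t,\I); L^2)}$, which dominates the $L^{\I}H^{1/2}$-contribution. Second, the pair $(q,r)=(10/3,10/3)$ is Klein--Gordon admissible at $L^2$-level (the Schr\"odinger-type identity $2/q+3/r=3/2$ holds), so KG-Strichartz gives $\|\mathcal{T}(w)\|_{L^{10/3}_{t,x}((t,\I))} \lesssim \|F\|_{L^1((t,\I); L^2)}$ as well. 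Assumption \eqref{eq:A2} immediately yields $\|R\|_{L^1((t,\I); L^2)} \lesssim \int_t^\I s^{-1-\gamma}\,ds \lesssim t^{-\gamma}$, delivering the target decay rate directly.

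The pointwise bound $|N(A+w)-N(A)| \lesssim (|A|^{2/3} + |w|^{2/3})|w|$ splits the nonlinearity. The linear-in-$w$ piece is absorbed by the smallness \eqref{eq:A1}: since $\|A(s)\|_{L^\I_x}^{2/3} \le \eta^{2/3} s^{-1}$,
\[
\|\,|A|^{2/3} w\,\|_{L^1((t,\I); L^2_x)} \le \eta^{2/3}\!\int_t^\I s^{-1}\|w(s)\|_{L^2_x}\, ds \lesssim \eta^{2/3}\gamma^{-1} t^{-\gamma}\|w\|_{X_T},
\]
contributing an $O(\eta^{2/3})$ factor to the Lipschitz constant, small for $\eta$ small. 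The main obstacle is the pure-$w$ piece $|w|^{5/3}$. Using the identity $\|w^{5/3}\|_{L^1_t L^2_x} = \|w\|_{L^{5/3}_t L^{10/3}_x}^{5/3}$, H\"older in time on each dyadic block $[2^j t, 2^{j+1} t]$ gives
\[
\|w\|_{L^{5/3}_t L^{10/3}_x([2^j t, 2^{j+1} t])} \lesssim (2^j t)^{3/10}\|w\|_{L^{10/3}_{t,x}([2^j t, \I))} \lesssim (2^j t)^{3/10-\gamma}\|w\|_{X_T},
\]
and geometric summation (valid for $\gamma > 3/10$) yields $\|w^{5/3}\|_{L^1((t,\I); L^2_x)} \lesssim t^{1/2 - 5\gamma/3}\|w\|_{X_T}^{5/3}$. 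Weighting by $t^{\gamma}$ produces $t^{1/2 - 2\gamma/3}$, which tends to $0$ as $t\to\I$ \emph{precisely} when $\gamma > 3/4$ --- exactly the hypothesis, so this is the point where the threshold emerges. Taking $T = T(\gamma, \eta)$ sufficiently large then closes the contraction on a small ball of $X_T$ and delivers the unique fixed point $w$, hence the desired solution $u = A + w$ of \eqref{K}.
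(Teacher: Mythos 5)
Your proposal is correct and essentially parallels the paper's own proof: reduce to a fixed-point problem for the Yang--Feldman integral equation and run a contraction in the space $X_T$ with the norm from \eqref{pr}, using Klein--Gordon Strichartz estimates; assumption \eqref{eq:A1} supplies the small Lipschitz factor $\eta^{2/3}$ and \eqref{eq:A2} supplies the source decay $t^{-\gamma}$. The only real technical divergence is in how the pure-$w$ term is routed. You estimate $\||w|^{5/3}\|_{L^1_t L^2_x}$ via a dyadic-in-time H\"older argument, converting $\|w\|_{L^{5/3}_t L^{10/3}_x}$ into the $X_T$-norm block by block. The paper instead places $|v|^{2/3}v$ in the dual Strichartz space $L^{10/7}_{t,x}$ and bounds it by a single H\"older split $\|v\|_{L^{10/3}_{t,x}}^{2/3}\left(\int_t^\infty\|v(\tau)\|_{L^2_x}^2\,d\tau\right)^{1/2}$, which avoids the dyadic sum altogether. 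Both yield the same prefactor $T^{1/2-2\gamma/3}$ and hence the same threshold $\gamma>3/4$, so the two routings agree in substance; the paper's is a bit more direct. A second cosmetic difference: you use the plain energy inequality to land in $L^\infty H^1\hookrightarrow L^\infty H^{1/2}$, while the paper instead uses the $(1-\Delta)^{-1/4}$ smoothing built into Lemma 3.1; both suffice. Finally, you assert but do not display the difference estimate $\|\mathcal{T}(w_1)-\mathcal{T}(w_2)\|_{X_T}\lesssim(\eta^{2/3}+R^{2/3}T^{1/2-2\gamma/3})\|w_1-w_2\|_{X_T}$; this follows the same H\"older pattern and is needed both to get the contraction and to deduce uniqueness in all of $X_T$, as the paper carries out in \eqref{eq:cont}.
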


The proposition will be proven in Section 3.

\subsection{Choice of an appropriate asymptotic profile}

An easy choice is $A=u_{\mathrm{ap}}$.
However, it does not work well.
Hence, we choose a suitable $A$ that satisfies the assumptions \eqref{eq:A1} and \eqref{eq:A2} and
\begin{equation}\label{eq:A3}
\sup_{t \ge T} t^{\gamma} \left(\|u_{\mathrm{ap}}-A\|_{L^{\infty}((t,\infty);H^{\frac12}_x)}
+\|u_{\mathrm{ap}}-A\|_{L^{\frac{10}{3}}((t,\infty);L^{\frac{10}{3}}_x)}\right) <\infty	.
\end{equation}
Then, the solution obtained by means of
Proposition \ref{prop:FVP} from $A$ possesses the desired asymptotics.

The obstacle in three-dimensional case lies in the fact that the phase correction term $\Psi$, given in \eqref{phase},
has the fractional power term $\rho^{2/3}$. 
The power comes from the nonlinearity.
Notice that because of the fractional power, we may not estimate
$(\Box+1)u_{\mathrm{ap}}$, in general. 
To overcome the difficulty, we exploit the argument in 
Ginibre-Ozawa \cite{GO}.
We introduce a modified phase corrector
\[
	\widetilde{\Psi}(s,\mu):=
	-  \frac{\l \Gamma(\frac{11}6) }{\sqrt\pi 
	\Gamma(\frac{7}3)} \widetilde\rho(s,\mu)^\frac23, 
	\qquad \widetilde\rho(s,\mu)= \sqrt{\rho(\mu)^2+s^{-1}\Jbr{\mu}^{-3}}.
\]
and an auxiliary approximate solution
\begin{equation}\label{eq:tuap}
	\widetilde{u}_{\mathrm{ap}}(t,x)=
	t^{-\frac32} {\bf 1}_{\{|x|<t\}}(t,x) \Jbr{\mu}^\frac32 \rho(\mu) \Re e^{i (\alpha(t,\mu)+ \beta(\mu))},
\end{equation}
where $\mu=\mu(t,x):=x/\sqrt{t^2-|x|^2}$ and
$$
\alpha(t,\mu):=\Jbr{\mu}^{-1} t + \widetilde{\Psi}(t,\mu)\log t.
$$
We will see the error from the modification is acceptable.

Starting from the modified asymptotic profile $\widetilde{u}_{\mathrm{ap}}$,
we construct the profile $A$ as in the two dimensional case \cite{MS3}.
This is the idea of the proof.
For readers' convenience, we recall the construction of $A$.
Since $N(\widetilde{u}_{\mathrm{ap}}(t))$ is $O(t^{-1})$ in $L^2_x$ as $t\to\I$,
we have to, at least, find the main parts of it and cancel them out, otherwise \eqref{eq:A2} fails.
In \cite{MS3}, a Fourier series expansion is introduced for this purpose.
Here, we split 
\begin{equation}\label{y01}
\begin{aligned}
N(\widetilde{u}_{\mathrm{ap}})
={}& \l t^{-\frac52}{\bf 1}_{\{|x|<t\}} \Jbr{\mu}^{\frac52} \rho(\mu)^{\frac53}
|\cos(\alpha+\beta)|^{\frac23} \cos(\alpha+\beta)\\
={}& \l c_1 t^{-\frac52}{\bf 1}_{\{|x|<t\}}\Jbr{\mu}^{\frac52} \rho(\mu)^{\frac53} \Re e^{i(\alpha+\beta)}\\
&{} + \l t^{-\frac52}{\bf 1}_{\{|x|<t\}}\Jbr{\mu}^{\frac52} \rho(\mu)^{\frac53} \sum_{n=3}^\I c_n \Re e^{in(\alpha+\beta)}\\
=:&N_{\mathrm{r}}+N_{\mathrm{nr}},
\end{aligned}
\end{equation}
where $c_n$ is the Fourier-cosine coefficient of $|\cos \theta|^\frac23 \cos \theta$.
We employ the following estimate on the coefficient.
\begin{lemma}[\cite{MMU}]\label{lem:MMU}
Let 
$c_n := \frac1{\pi} \int_{-\pi}^\pi |\cos \theta |^{\frac23}\cos \theta \cos n\theta d\theta$
 for $n\ge 0$.
Then, $c_n=0$ for even $n$ and 
\[
	c_n = \frac{2(-1)^{\frac{n-1}2} \Gamma(\frac{11}6) \Gamma(\frac{3n-5}6)}{\sqrt\pi 
	\Gamma(- \frac{1}3) \Gamma(\frac{3n+11}6)}
\]
for odd $n$.
In particular, $c_n = O(n^{-8/3})$ as $n\to\I$.
\end{lemma}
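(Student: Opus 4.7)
The plan is to prove the lemma by a direct computation, exploiting two elementary symmetries of the generating function and then reducing the resulting cosine-power integral via classical Gamma-function identities.

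First I would exploit the symmetries of $f(\theta):=|\cos\theta|^{2/3}\cos\theta$. Since $f$ is even, $c_n=\frac{2}{\pi}\int_0^{\pi}f(\theta)\cos(n\theta)\,d\theta$. The identity $f(\theta+\pi)=-f(\theta)$, combined with the translation $\theta\mapsto\theta+\pi$ in the defining integral, gives $c_n=(-1)^{n+1}c_n$, hence $c_n=0$ for even $n$. For odd $n$, I would split the $[0,\pi]$ integral at $\pi/2$ and substitute $\theta=\pi-\phi$ on $[\pi/2,\pi]$, using $\cos(n\pi-n\phi)=(-1)^n\cos(n\phi)=-\cos(n\phi)$ and $|\cos(\pi-\phi)|^{2/3}\cos(\pi-\phi)=-\cos^{5/3}\phi$, to obtain
\[
c_n=\frac{4}{\pi}\int_0^{\pi/2}\cos^{5/3}\theta\cos(n\theta)\,d\theta.
\]

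Next I would apply the classical evaluation
\[
\int_0^{\pi/2}\cos^{p}\theta\cos(a\theta)\,d\theta=\frac{\pi\,\Gamma(p+1)}{2^{p+1}\Gamma\!\bigl(\tfrac{p+a}{2}+1\bigr)\Gamma\!\bigl(\tfrac{p-a}{2}+1\bigr)},\qquad \Re p>-1,
\]
with $p=5/3$ and $a=n$, which yields
\[
c_n=\frac{\Gamma(8/3)}{2^{2/3}\,\Gamma\!\bigl(\tfrac{3n+11}{6}\bigr)\Gamma\!\bigl(\tfrac{11-3n}{6}\bigr)}.
\]
To bring this into the stated form I would use the reflection formula on the second Gamma in the denominator:
\[
\Gamma\!\Bigl(\tfrac{11-3n}{6}\Bigr)=\frac{\pi}{\sin\!\bigl(\pi(11-3n)/6\bigr)\,\Gamma\!\bigl(\tfrac{3n-5}{6}\bigr)}.
\]
For odd $n=2k+1$, a direct computation shows $\sin\!\bigl(\pi(11-3n)/6\bigr)=-\tfrac{\sqrt 3}{2}(-1)^{(n-1)/2}$, producing the sign $(-1)^{(n-1)/2}$ that appears in the statement. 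Finally, the constant $\Gamma(8/3)$ would be converted using the Legendre duplication formula $\Gamma(4/3)\Gamma(11/6)=2^{-5/3}\sqrt\pi\,\Gamma(8/3)$, together with $\Gamma(4/3)=\tfrac13\Gamma(1/3)$, $\Gamma(-1/3)=-3\Gamma(2/3)$, and the reflection identity $\Gamma(1/3)\Gamma(2/3)=\pi/\sin(\pi/3)=2\pi/\sqrt 3$; combining these reduces the prefactor to $2\Gamma(11/6)/(\sqrt\pi\,\Gamma(-1/3))$.

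For the asymptotic $c_n=O(n^{-8/3})$, I would invoke Stirling in the form $\Gamma(z+a)/\Gamma(z+b)\sim z^{a-b}$ as $z\to\infty$; applied with $z=n/2$, $a=-5/6$, $b=11/6$, it gives
\[
\frac{\Gamma\!\bigl(\tfrac{3n-5}{6}\bigr)}{\Gamma\!\bigl(\tfrac{3n+11}{6}\bigr)}\sim\Bigl(\tfrac{n}{2}\Bigr)^{-8/3},
\]
yielding the claimed decay rate. The only mildly delicate step is the sign bookkeeping in the reflection formula (to produce precisely the factor $(-1)^{(n-1)/2}$), which must be checked by separating $n\pi/2$ into its integer and half-integer parts; the rest is a routine exercise in Gamma-function gymnastics.
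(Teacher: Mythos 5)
Your proof is correct, but a comparison with the paper's argument is not really possible here: the paper does not prove Lemma~\ref{lem:MMU} at all --- it simply cites the reference \cite{MMU} for it. Your argument is a self-contained, classical one, and I checked it through line by line.

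The parity argument (translation $\theta\mapsto\theta+\pi$ combined with $f(\theta+\pi)=-f(\theta)$, using $2\pi$-periodicity to shift the interval back to $[-\pi,\pi]$) correctly gives $c_n=(-1)^{n+1}c_n$, hence $c_n=0$ for $n$ even. The reduction to $c_n=\frac{4}{\pi}\int_0^{\pi/2}\cos^{5/3}\theta\cos(n\theta)\,d\theta$ for odd $n$ is correct. The Cauchy-type formula
\[
\int_0^{\pi/2}\cos^{p}\theta\cos(a\theta)\,d\theta=\frac{\pi\,\Gamma(p+1)}{2^{p+1}\,\Gamma\!\bigl(\tfrac{p+a}{2}+1\bigr)\Gamma\!\bigl(\tfrac{p-a}{2}+1\bigr)}
\]
is the standard one (e.g.\ Gradshteyn--Ryzhik 3.631) and applies with $p=5/3>-1$; the Gamma in the denominator with a negative argument is handled by analytic continuation, and for odd $n$ the argument $\frac{11-3n}{6}$ is never a nonpositive integer, so this is legitimate. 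The reflection step and the sign computation $\sin\!\bigl(\pi(11-3n)/6\bigr)=-\tfrac{\sqrt{3}}{2}(-1)^{(n-1)/2}$ check out (write $n=2k+1$ and use $\sin(4\pi/3-k\pi)=(-1)^k\sin(4\pi/3)$). The constant-matching via Legendre duplication $\Gamma(4/3)\Gamma(11/6)=2^{-5/3}\sqrt{\pi}\,\Gamma(8/3)$ together with $\Gamma(4/3)=\tfrac13\Gamma(1/3)$, $\Gamma(-1/3)=-3\Gamma(2/3)$, and $\Gamma(1/3)\Gamma(2/3)=2\pi/\sqrt{3}$ reduces $-\tfrac{\sqrt{3}\,\Gamma(8/3)}{2^{5/3}\pi}$ to $\tfrac{2\Gamma(11/6)}{\sqrt{\pi}\,\Gamma(-1/3)}$, as you claim. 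Finally, $\Gamma(\tfrac{n}{2}-\tfrac56)/\Gamma(\tfrac{n}{2}+\tfrac{11}{6})\sim (n/2)^{-8/3}$ by the ratio asymptotics, giving the stated decay. This is exactly the kind of ``Gamma-function gymnastics'' one would expect to find in \cite{MMU}, and your write-up could serve as a proof in its own right.
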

Thanks to the choice of the phase function $\widetilde\Psi$, 
the resonance part $N_{\mathrm{r}}$ is close to $(\Box+1)\widetilde{u}_{\mathrm{ap}}$ (See Proposition \ref{prop:mainest1}).
To cancel out $N_{\mathrm{nr}}$, we introduce
\begin{equation}\label{eq:tvap}
\widetilde{v}_{\mathrm{ap}}(t,x) := \sum_{n=2}^\I v_n(t,\mu(t,x)),
\end{equation}
with
\begin{equation}\label{eq:vn}
	v_{n}(s,\mu) := -\frac{\l c_n}{n^2-1} t^{-\frac52} \Jbr{\mu}^{\frac52} \widetilde\rho(s,\mu)^{\frac23}\rho(\mu)
	\Re (e^{in(\alpha(s,\mu)+\beta(\mu))}).
\end{equation}
It will turn out that the non-resonance part $N_{\mathrm{nr}}$ is successfully canceled out by $\widetilde{v}_{\mathrm{ap}}$ (See Proposition
\ref{prop:mainest2}).
\begin{remark}
This kind of approximation was introduced in H\"{o}rmander \cite{H} 
for the Klein-Gordon equation with \emph{polynomial} nonlinearity in $(u,\overline{u})$. 
See also \cite{MTT,ST} for the nonlinear Schr\"{o}dinger equation 
with polynomial nonlinearity in $(u,\overline{u})$. 
\end{remark}

Based on the above observation, we will show the following proposition.

\begin{proposition}\label{prop:mainest0}
Let $A=\widetilde{u}_{\mathrm{ap}}+ \widetilde{v}_{\mathrm{ap}}$, where 
$\widetilde{u}_{\mathrm{ap}}$ and $\widetilde{v}_{\mathrm{ap}}$ are given in \eqref{eq:tuap} and \eqref{eq:tvap},
respectively.
Then, \eqref{eq:A2} and \eqref{eq:A3} holds for any $\gamma<5/6$ and $T_0\ge3$.
Furthermore, for any $\eta>0$ and $\gamma<5/6$, there exists $\eps$ such that
if $\tnorm{\Jbr{\cdot}^{3/2}\rho}_{L^\I_\mu} \le \eps$ then $A$ satisfies \eqref{eq:A1} for some $T_0\ge 3$.
\end{proposition}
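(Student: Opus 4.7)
The plan is to decompose the residual
\[
(\Box+1)A-N(A)=\bigl[(\Box+1)\widetilde{u}_{\mathrm{ap}}-N_{\mathrm{r}}\bigr]+\bigl[(\Box+1)\widetilde{v}_{\mathrm{ap}}-N_{\mathrm{nr}}\bigr]+\bigl[N(\widetilde{u}_{\mathrm{ap}})-N(\widetilde{u}_{\mathrm{ap}}+\widetilde{v}_{\mathrm{ap}})\bigr],
\]
in accordance with the splitting \eqref{y01}, and to treat the three brackets separately for \eqref{eq:A2}. The first two will be supplied by Propositions \ref{prop:mainest1} and \ref{prop:mainest2}, which is precisely where the modified phase $\widetilde\Psi$ and the auxiliary series $\widetilde{v}_{\mathrm{ap}}$ earn their keep, each producing an $L^2_x$ error of size $O(t^{-1-\gamma})$ for any $\gamma<5/6$. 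The third bracket I would bound pointwise by the H\"older-type inequality $|N(f)-N(g)|\lesssim(|f|^{2/3}+|g|^{2/3})|f-g|$, using $|\widetilde{u}_{\mathrm{ap}}|\lesssim t^{-3/2}\langle\mu\rangle^{3/2}\rho$ from \eqref{eq:tuap} and, via Lemma \ref{lem:MMU} together with the subadditivity bound $\widetilde\rho^{2/3}\le\rho^{2/3}+t^{-1/3}\langle\mu\rangle^{-1}$ (which follows from $(a+b)^{2/3}\le a^{2/3}+b^{2/3}$ applied to $\widetilde\rho^{2}=\rho^{2}+t^{-1}\langle\mu\rangle^{-3}$), a matching bound on $|\widetilde{v}_{\mathrm{ap}}|$. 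Passing to $L^{2}_x$ through the hyperbolic change of variables $\mu=x/\sqrt{t^{2}-|x|^{2}}$ on $\{|x|<t\}$, whose Jacobian is $t^{3}\langle\mu\rangle^{-5}$, then gives a remainder much smaller than $t^{-1-\gamma}$.

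For \eqref{eq:A1} I would combine the same pointwise bounds to obtain $\|A(t)\|_{L^{\infty}_{x}}\lesssim t^{-3/2}\|\langle\mu\rangle^{3/2}\rho\|_{L^{\infty}_{\mu}}+t^{-5/2}C(\rho)$, then take $\varepsilon$ small enough for the first term and $T_{0}$ large enough to absorb the second.

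For \eqref{eq:A3} I would write $u_{\mathrm{ap}}-A=(u_{\mathrm{ap}}-\widetilde{u}_{\mathrm{ap}})-\widetilde{v}_{\mathrm{ap}}$. The first difference lies entirely in the phase, and the mean-value theorem together with the estimate $|\widetilde\Psi(t,\mu)-\Psi(\mu)|\lesssim t^{-1/3}\langle\mu\rangle^{-1}$ (again via subadditivity of $a\mapsto a^{2/3}$) gives the pointwise bound $|u_{\mathrm{ap}}-\widetilde{u}_{\mathrm{ap}}|\lesssim t^{-11/6}(\log t)\langle\mu\rangle^{1/2}\rho$. Together with the pointwise estimate on $\widetilde{v}_{\mathrm{ap}}$ noted above, passing to $H^{1/2}_{x}$ via the same change of variables, with $\partial_{x_{j}}$ written as a linear combination of $\partial_{\mu_{k}}$'s with coefficients of size $\langle\mu\rangle^{2}/t$, yields decay of order $t^{-11/6}\log t$, comfortably better than $t^{-\gamma}$ for any $\gamma<5/6$. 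The $L^{10/3}_{t}L^{10/3}_{x}$ component is obtained by raising the pointwise decay to the exponent $10/3$ and integrating in $t$ with margin.

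The hardest technical input is the control of the first bracket in the decomposition, namely Proposition \ref{prop:mainest1}: the shift from $\rho$ to $\widetilde\rho$ is forced in order to gain enough regularity to apply $(\Box+1)$ to $\widetilde{u}_{\mathrm{ap}}$, but the resulting phase discrepancy must still be shown not to damage \eqref{eq:A3}. It is the balance between these two effects that ultimately pins the admissible exponent at $\gamma<5/6$ and constitutes the quantitative heart of the construction.
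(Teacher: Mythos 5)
Your decomposition of the residual into resonant, non-resonant and lower-order pieces, your appeal to Propositions \ref{prop:mainest1} and \ref{prop:mainest2} for the first two brackets, your H\"older bound on $N(\widetilde{u}_{\mathrm{ap}})-N(A)$, and your treatment of \eqref{eq:A1} all match the paper. The gap is in your argument for \eqref{eq:A3}. You replace the phase-discrepancy estimate of Lemma \ref{lem:mainest1_pf1} with the coarser subadditivity bound $\widetilde\rho^{2/3}-\rho^{2/3}\le t^{-1/3}\langle\mu\rangle^{-1}$, and then assert that the resulting $H^{1/2}_x$ decay of $u_{\mathrm{ap}}-\widetilde{u}_{\mathrm{ap}}$ is $t^{-11/6}\log t$. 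That is not what your bound gives: the pointwise estimate $|u_{\mathrm{ap}}-\widetilde{u}_{\mathrm{ap}}|\lesssim t^{-11/6}(\log t)\langle\mu\rangle^{1/2}\rho$ must be fed through the change of variables $\mu=x/\sqrt{t^2-|x|^2}$, whose Jacobian $t^3\langle\mu\rangle^{-5}$ contributes a factor $t^{3/2}$ to every $L^2_x$ norm (cf.\ \eqref{eq:change}). What you actually obtain is $\|u_{\mathrm{ap}}-\widetilde{u}_{\mathrm{ap}}\|_{L^2_x}\lesssim t^{-1/3}\log t$, which would force $\gamma<1/3$ in \eqref{eq:A3}. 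This is incompatible with the requirement $\gamma>3/4$ in Proposition \ref{prop:FVP}, so the construction would produce nothing.

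The missing ingredient is the sharper estimate of Lemma \ref{lem:mainest1_pf1}. The discrepancy $\widetilde\rho^{2/3}-\rho^{2/3}$ is largest exactly where $\rho$ is small, and everywhere it appears it carries a prefactor $\rho$. Writing $\widetilde\rho^{2/3}-\rho^{2/3}=\frac13\int_0^1(\rho^2+\theta s^{-1}\langle\mu\rangle^{-3})^{-2/3}\,s^{-1}\langle\mu\rangle^{-3}\,d\theta$ and splitting $-\tfrac23=-\tfrac12-\tfrac16$, one estimates $(\rho^2+\theta s^{-1}\langle\mu\rangle^{-3})^{-1/2}\le\rho^{-1}$ and $(\rho^2+\theta s^{-1}\langle\mu\rangle^{-3})^{-1/6}\le(\theta s^{-1}\langle\mu\rangle^{-3})^{-1/6}$, yielding $\rho\,(\widetilde\rho^{2/3}-\rho^{2/3})\lesssim s^{-5/6}\langle\mu\rangle^{-5/2}$ uniformly in $\rho$. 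This upgrades the $L^2_x$ decay of $u_{\mathrm{ap}}-\widetilde{u}_{\mathrm{ap}}$ to $t^{-5/6}\log t$, which is precisely what produces the threshold $\gamma<5/6$ and makes the window $(3/4,5/6)$ in Theorem \ref{thm:main} non-empty. The same device --- trading negative powers of $\widetilde\rho$ against spare factors of $\rho$, using $\widetilde\rho\ge\max(\rho,\,s^{-1/2}\langle\mu\rangle^{-3/2})$ from \eqref{eq:trho_est} --- recurs throughout Lemmas \ref{lem:mainest1_pf2} and \ref{lem:mainest1_pf3}, which you will need in order to actually carry out the estimates in Proposition \ref{prop:mainest1}.
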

Together with Proposition \ref{prop:FVP}, this proposition implies 
Theorem \ref{thm:main}. 
Section \ref{sec:prf} is devoted to the proof of the above proposition.

\begin{remark}\label{rem:general}
Let us consider a generalization of Theorem \ref{thm:main} to any real-valued nonlinearity satisfying
$F(\l u)=\l^{5/3} F(u) $ for any $\l>0$ and $u\in\R$.
Notice that this class of nonlinearity
is written as $F(u)= \l_1 |u|^\frac23 u + \l_2 |u|^\frac53$.
Theorem \ref{thm:main} corresponds to the case where $\l_2=0$.
By means of the following lemma, we see that the nonlinearity $\l_2 |u|^{5/3}$ does not contain
a resonant part, and so that
we can treat the above general nonlinearity by the same argument.
\end{remark}
\begin{lemma}
Let $\widetilde{c}_n := \frac1\pi \int_{-\pi}^\pi |\cos \theta|^{\frac53} \cos n\theta d\theta$ for $n\ge0$.
Then, $\widetilde{c}_n=0$ for odd $n$ and
\[
	\widetilde{c}_n = 
	\frac{2 (-1)^{\frac{n}2}  \Gamma(\frac43)\Gamma(\frac{3n-5}6)}{\sqrt\pi \Gamma(-\frac56)\Gamma(\frac{3n+11}6)}\qedhere
\]
for even $n$. In particular, $\widetilde{c}_n = O(n^{-8/3})$ as $n\to\I$.
\end{lemma}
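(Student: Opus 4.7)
The plan is to follow the template used for Lemma~\ref{lem:MMU}. First, since $|\cos(\theta+\pi)|=|\cos\theta|$, the function $|\cos\theta|^{5/3}$ is $\pi$-periodic, so its Fourier expansion on $[-\pi,\pi]$ contains only even harmonics; this handles the vanishing of $\widetilde c_n$ for odd $n$.

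For $n=2m$ even, I would use the further symmetry $|\cos(\pi-\theta)|=|\cos\theta|$ together with $\cos(2m(\pi-\theta))=\cos(2m\theta)$ to reduce to
\[
\widetilde c_n = \frac{4}{\pi}\int_0^{\pi/2}\cos^{5/3}\theta\,\cos(2m\theta)\,d\theta,
\]
and then invoke the classical Beta-type identity
\[
\int_0^{\pi/2}\cos^p\theta\,\cos(a\theta)\,d\theta = \frac{\pi\,\Gamma(p+1)}{2^{p+1}\,\Gamma(1+\tfrac{p+a}{2})\,\Gamma(1+\tfrac{p-a}{2})}
\]
with $p=5/3$, $a=2m$ to obtain
\[
\widetilde c_n = \frac{\Gamma(8/3)}{2^{2/3}\,\Gamma(m+11/6)\,\Gamma(11/6-m)}.
\]

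The sign $(-1)^{n/2}$ and the stated shape, with $\Gamma(m-5/6)=\Gamma((3n-5)/6)$ in the numerator and $\Gamma(m+11/6)=\Gamma((3n+11)/6)$ in the denominator, then follow from applying the reflection identity $\Gamma(z)\Gamma(1-z)=\pi/\sin\pi z$ to $\Gamma(11/6-m)$: since $\sin(\pi(11/6-m))=(-1)^{m+1}/2$ for $m\in\Z$, one gets $\Gamma(11/6-m)=2\pi(-1)^{m+1}/\Gamma(m-5/6)$. The constant prefactor $-\Gamma(8/3)/(2^{5/3}\pi)$ that comes out of this substitution is rewritten as $2\Gamma(4/3)/(\sqrt\pi\,\Gamma(-5/6))$ by combining the reflection identity $\Gamma(-5/6)\Gamma(11/6)=-2\pi$ with Legendre's duplication formula applied to $\Gamma(8/3)$; the resulting identity can be sanity-checked against $m=0$, where the claim reduces to the Wallis-type evaluation $\widetilde c_0=2\Gamma(4/3)/(\sqrt\pi\,\Gamma(11/6))$.

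Finally, $\widetilde c_n=O(n^{-8/3})$ follows from Stirling: $\Gamma(m-5/6)/\Gamma(m+11/6)\sim m^{-8/3}$ as $m\to\I$, and $m\sim n/2$. The only non-routine step is the sign and constant bookkeeping in the last rewriting of the prefactor; everything else is a direct analogue of the calculation behind Lemma~\ref{lem:MMU}.
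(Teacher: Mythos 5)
Your argument is correct, and it is worth noting that the paper itself does not prove this lemma (nor the companion Lemma~\ref{lem:MMU}): both are simply referred to \cite{MMU}. So you are supplying content the paper elides. I checked the steps. The $\pi$-periodicity of $|\cos\theta|^{5/3}$ kills all odd harmonics. For $n=2m$, the Cauchy--type evaluation
\[
\int_0^{\pi/2}\cos^p\theta\,\cos(a\theta)\,d\theta
=\frac{\pi\,\Gamma(p+1)}{2^{p+1}\,\Gamma\bigl(1+\tfrac{p+a}{2}\bigr)\Gamma\bigl(1+\tfrac{p-a}{2}\bigr)}
\]
with $p=5/3$, $a=2m$ gives $\widetilde c_n=\Gamma(8/3)\big/\bigl(2^{2/3}\Gamma(m+\tfrac{11}{6})\Gamma(\tfrac{11}{6}-m)\bigr)$ as you state. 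The reflection formula with $\sin\bigl(\pi(\tfrac{11}{6}-m)\bigr)=(-1)^{m+1}/2$ turns $\Gamma(\tfrac{11}{6}-m)$ into $2\pi(-1)^{m+1}/\Gamma(m-\tfrac56)$, and the prefactor identity $-\Gamma(8/3)/(2^{5/3}\pi)=2\Gamma(4/3)/(\sqrt\pi\,\Gamma(-5/6))$ is exactly Legendre duplication $\sqrt\pi\,\Gamma(8/3)=2^{5/3}\Gamma(4/3)\Gamma(11/6)$ combined with $\Gamma(-5/6)\Gamma(11/6)=-2\pi$; your $m=0$ Wallis check also comes out right. The Stirling step for $O(n^{-8/3})$ is routine. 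This is almost certainly the same beta-integral route that \cite{MMU} uses for Lemma~\ref{lem:MMU}, so there is no methodological divergence to report; the only caveat is that the chain of $\Gamma$-identities deserves the explicit display you gave, since a sign slip in the reflection step is easy to make.
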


\begin{proof}
The proof is similar to Lemma \ref{lem:MMU}. See, \cite{MMU}.
\end{proof}

\section{Proof of Proposition \ref{prop:FVP}} \label{sec:FVP}

In this section, we prove Proposition \ref{prop:FVP}.
The proof is essentially the same as in the two dimensional case \cite{MS3}.
The following 
inhomogeneous Strichartz estimates associated with the 
Klein-Gordon equation is crucial for the proof.
 Let 
\begin{equation}
{{\mathcal G}}[g](t)
:=\int_t^{\infty} \sin((t-\tau)\sqrt{1-\Delta})(1-\Delta)^{-1/2}g(\tau)d\tau.
\label{g}
\end{equation}

\begin{lemma}\label{lem:St} Let $2\le q\le 6$ and $2/p+3/q=3/2$. Then we have
\begin{align*}
\|{{\mathcal G}}[g]
\|_{L_{t}^{p}([T,\infty),L_{x}^{q})}
\le{}&C\|(1-\Delta)^{\frac12(\frac32-\frac5q)}g\|_{L_{t}^{p'}([T,\infty),L_{x}^{q'})},\\
\|{{\mathcal G}}[g]
\|_{L_{t}^{\infty}([T,\infty),L_{x}^{2})}
\le{}&C\|(1-\Delta)^{\frac12(\frac14-\frac5{2q})}g\|_{L_{t}^{p'}([T,\infty),L_{x}^{q'})},\\
\|{{\mathcal G}}[g]\|_{L_{t}^{p}([T,\infty),L_{x}^{q})}
\le{}&C\|(1-\Delta)^{\frac12(\frac14-\frac5{2q})}g\|_{L_{t}^{1}([T,\infty),L_{x}^{2})}.
\end{align*}
\end{lemma}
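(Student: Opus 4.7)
The plan is to reduce the three estimates to the $TT^*$/Keel-Tao framework applied to the half-wave Klein-Gordon propagators. First I would decompose
\[
\sin((t-\tau)\sqrt{1-\Delta}) = \tfrac{1}{2i}\bigl(e^{i(t-\tau)\sqrt{1-\Delta}} - e^{-i(t-\tau)\sqrt{1-\Delta}}\bigr),
\]
so that $\mathcal{G}[g]$ becomes a retarded Duhamel integral against the half-wave propagators $U_\pm(t):=e^{\pm it\sqrt{1-\Delta}}$ composed with the smoothing factor $(1-\Delta)^{-1/2}$. It suffices to establish the three inequalities with $U_\pm$ in place of the sine, the extra $(1-\Delta)^{-1/2}$ being absorbed into the derivative exponents at the end.

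The main analytic ingredient is the Schr\"odinger-type dispersive estimate for $U_\pm$ in three dimensions: for $2\le q\le\infty$,
\[
\|U_\pm(t)f\|_{L^q_x}\le C|t|^{-d(1/2-1/q)}\|(1-\Delta)^{s(q)/2}f\|_{L^{q'}_x},
\]
with derivative loss $s(q)=(d+2)(\tfrac12-\tfrac1q)=\tfrac52-\tfrac5q$ in $d=3$. I would establish this by writing $U_\pm(t)$ as an oscillatory integral with phase $x\cdot\xi\pm t\langle\xi\rangle$ and applying stationary phase after a Littlewood-Paley decomposition: the low-frequency part behaves like the Schr\"odinger evolution (because $\langle\xi\rangle\approx 1+|\xi|^2/2$), while the high-frequency piece is of wave type on a surface of nonvanishing Gaussian curvature and, after absorbing $(1-\Delta)^{s(q)/2}$, shares the same effective decay rate.

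With this bound in hand, applying Hardy-Littlewood-Sobolev to the retarded integral $\int_t^\infty|t-\tau|^{-d(1/2-1/q)}\|(1-\Delta)^{s(q)/2}g(\tau)\|_{L^{q'}_x}d\tau$ under the admissibility condition $2/p+3/q=3/2$ yields the first inequality; the $-\tfrac12$ coming from $(1-\Delta)^{-1/2}$ produces the stated exponent $\tfrac12(\tfrac32-\tfrac5q)=\tfrac12(s(q)-1)$. The second and third inequalities are the mixed energy/Strichartz endpoints of the same $TT^*$ setup: by unitarity of $U_\pm$ on $L^2$ they are equivalent to the homogeneous bound $\|U_\pm(\cdot)f\|_{L^p_tL^q_x}\le C\|(1-\Delta)^{s(q)/4}f\|_{L^2_x}$ and its dual (one-sided $TT^*$, requiring only half the dispersive loss), so that after combining with the $(1-\Delta)^{-1/2}$ in $\mathcal{G}$ the net loss becomes $\tfrac{s(q)}{4}-\tfrac12=\tfrac12(\tfrac14-\tfrac5{2q})$, exactly as stated. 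The Christ-Kiselev lemma handles the passage from the full-line inequalities to the retarded ones on $[T,\infty)$ for the non-diagonal case.

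The main obstacle is the dispersive estimate itself: since the symbol $\langle\xi\rangle$ has mixed wave/Schr\"odinger character, the precise derivative loss $\tfrac52-\tfrac5q$ is the quantity that balances the two frequency regimes, and obtaining it sharply requires a careful Littlewood-Paley analysis with the two regimes handled separately. Once this dispersive input is secured, all remaining steps are purely functional-analytic and essentially mirror the two-dimensional argument of \cite{MS3}, with only the dimensional exponents adjusted to $d=3$.
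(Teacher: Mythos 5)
Your proposal is correct and follows essentially the same route as the paper, which derives the lemma by combining the Marshall--Strauss--Wainger $L^{p}$--$L^{q}$ dispersive estimate for the Klein--Gordon propagator with the $TT^{*}$/duality argument of Yajima for $q\neq 6$ and the Keel--Tao argument for the endpoint $q=6$. The exponent bookkeeping in your sketch (dispersive loss $s(q)=\tfrac52-\tfrac5q$, homogeneous Strichartz loss $s(q)/4$, net shift by $-\tfrac12$ from $(1-\Delta)^{-1/2}$) reproduces exactly the stated derivative orders, and your use of Christ--Kiselev is a standard alternative to Yajima's retarded-integral argument.
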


\begin{proof}
The above inequalities follow from combination of the $L^{p}$-$L^{q}$ estimate 
for the solution to the Klein-Gordon equation by \cite{MSW} with the duality 
argument by \cite{Y} for the non-endpoint case $q\neq6$ and the 
argument by \cite{KT} for the endpoint case $q=6$. 
Since the proof is now standard, we omit the detail.
\end{proof}

\begin{proof}[Proof of Proposition \ref{prop:FVP}]
We introduce 
\begin{gather*}
X_T =\{ w \in C([T,\infty);L^2_x); \ \|w\|_{X_T} < \infty \}
\end{gather*}
for $T\ge 3$, where
\begin{equation*}
\|w\|_{X_T} = \sup_{t \ge T} t^{\gamma} 
\left(\|w\|_{L^{\infty}((t,\infty);H^{\frac12}_x)}
+\|w\|_{L^{\frac{10}{3}}((t,\infty);L^{\frac{10}{3}}_x)}\right).
\end{equation*}
For $R >0$ and $T>0$, we define
\begin{gather*}
\widetilde{X}_T(R)= \{ w \in C([T,\infty);L^2_x);
\ \|w\|_{X_T} \le R \}.
\end{gather*}
The function space $X_T$ is a Banach space with the norm $\|\cdot\|_{X_T}$
and $\widetilde{X}_T(\rho)$ is a complete metric space with
the $\|\cdot\|_{X_T}$-metric.

We put $v=u-A$.
Then the equation (\ref{K}) is equivalent to
\begin{equation}
(\Box+1)v = N(v+A) -N(A) - F,
\label{K'}
\end{equation}
where
\[
	F := (\Box+1)A - N(A)
\]
The associate integral equation to the equation (\ref{K'}) is
\begin{equation}
v=
{{\mathcal G}}[\{ N(v+A) -N(A)\} -F],
\label{eq:INT}
\end{equation}
where ${{\mathcal G}}$ is given by (\ref{g}). It suffices to show the existence of a unique solution $v$
to the equation \eqref{eq:INT} in $X_T$ for suitable $\eta>0$ and $T\ge T_0$.
We prove this assertion by the contraction argument.
Define the nonlinear operator $\Phi$ by
\begin{equation*}
\Phi v:=
{{\mathcal G}}[\{ N(v+A) -N(A)\} -F]
\end{equation*}
for $v \in \widetilde{X}_T(R)$. We show that 
$\Phi$ is a contraction map on $\widetilde{X}_T(\rho)$
if $R>0$, $T\ge T_0$, and $\eta>0$ are suitably chosen.
Let $v \in \widetilde{X}_T(R)$ and $t \ge T$.
By the assumptions and Lemma \ref{lem:St}, we see
\begin{equation*}
\begin{split}
\|(\Phi &v)(t)\|_{L^{\infty}((t,\infty);H^{\frac12}_x)}
+\|\Phi v\|_{L^{\frac{10}{3}}((t,\infty);L^{\frac{10}{3}}_x)} \\
\le & C(\||v|^{\frac23}v\|_{L^{\frac{10}{7}}((t,\infty);L^{\frac{10}{7}}_x)}\\
&
+\|(1-\Delta)^{-\frac14}\{|v+A|^{\frac23}(v+A)-|v|^{\frac23}v-|A|^{\frac23}A\}\|_{L^1((t,\infty);L^2_x)} \\
& +\|(1-\Delta)^{-\frac14}F\|_{L^1((t,\infty);L^2_x)}) \\
\le & C \biggl\{ \|v\|_{L^{\frac{10}{3}} ((t,\infty);L^{\frac{10}{3}}_x)}^{\frac23}
\left( \int_t^{\infty} \|v(\tau)\|_{L^2_x}^2 \,d\tau \right)^{\frac12} 
+\int_t^{\infty} \|A(\tau)\|_{L^{\infty}_x}^{\frac23}
\|v(\tau)\|_{L^2_x}\,d\tau \\
& +\int_t^{\infty}\|F(\tau)\|_{L^2_x}\,d\tau \biggr\} \\
\le & C \biggl\{ R^{\frac23} t^{-\frac23\gamma}
\left( \int_t^{\infty} R^2\tau^{-2\gamma} \,d\tau \right)^{\frac12}
+\int_t^{\infty} \eta^{\frac23} 
R \tau^{-1-\gamma} \,d\tau +\int_t^{\infty} M \tau^{-1-\gamma} \,d\tau \biggr\} \\
\le & C t^{-\gamma} (R^{\frac53} t^{-\frac{2}{3}\gamma+\frac12} +R\eta^{\frac23}
+M),
\end{split}
\end{equation*}
where $M$ is an upper bound on the right hand side of \eqref{eq:A2}.
Therefore we obtain
\begin{equation}
\|\Phi v\|_{X_T} \le C_{1} (R^{\frac53} T^{-\frac{2}{3}\gamma+\frac12} +R\eta^{\frac23}
+M).
\label{eq:into}
\end{equation}
In the same way as above, for $v_1,v_2 \in \widetilde{X}_T(R)$,
we can show
\begin{equation}
\begin{split}
\|\Phi v_1 & -\Phi v_2\|_{X_T} \\
\le & C_{2}((\|v_1\|_{X_T}^{\frac23} +\|v_2\|_{X_T}^{\frac23}) 
T^{-\frac{2}{3}\gamma+\frac12}
+\eta^{\frac23})\|v_1-v_2\|_{X_T} \\
\le & C_{2}(R^{\frac23} T^{-\frac{2}{3}\gamma+\frac12} +\eta^{\frac23}) \|v_1-v_2\|_{X_T}.
\label{eq:cont}
\end{split}
\end{equation}
We first fix $R$ so that $C_1 M \le R/2$. 
Then, using the fact that $\gamma>3/4$,
we are able to choose
a sufficiently large $T>0$ and a sufficiently small $\eta >0$ 
such that
\begin{gather*}
C_{1}(R^{\frac53} T^{-\frac{2}{3}\gamma+\frac12} +R\eta^{\frac23} +\eta)
\le R, \\
C_{2}(R^{\frac23} T^{-\frac{2}{3}\gamma+\frac12} +\eta^{\frac23}) \le \frac{1}{2},
\end{gather*}
For such $R,T,\eta$, there exists a unique solution to
the integral equation \eqref{eq:INT} in $\widetilde{X}_T(\rho)$.
The uniqueness of solutions to the equation
\eqref{eq:INT} in $X_T$ follows from
the first inequality of the estimate \eqref{eq:cont} for
solutions $v_1 \in X_T$ and $v_2 \in X_T$.
Hence 
the equation \eqref{eq:INT} has a unique solution in $X_T$.
This completes the proof of Proposition \ref{prop:FVP}.
\end{proof}


\section{Proof of Proposition \ref{prop:mainest0}} \label{sec:prf}

In this section, we prove Proposition \ref{prop:mainest0}.
Since \eqref{eq:A1} is trivial, we prove \eqref{eq:A3} and \eqref{eq:A2} in Sections \ref{subsec:A3}
and \ref{subsec:A2}, respectively, after preparing preliminary estimates in Section \ref{subsec:pre}.
Hereafter we always restrict our attention to the region $|x|<t$ and $t\ge 3$.

We introduce new variables $(s,\mu)$ by $s=t$ and $\mu=x/\sqrt{t^2-|x|^2}$.
Then, we have 
\begin{align}
\label{ptt}
\pt_{t}
&= \pt_{s} -s^{-1}\langle \mu \rangle^{2}\mu \cdot \nabla_\mu ,\\
\label{ptx}
\pt_{x_{i}}
&= s^{-1}\langle \mu \rangle \pt_{\mu_{i}}
+s^{-1}\langle \mu \rangle\mu_{i}\mu \cdot \nabla_\mu
\end{align}
and
\begin{align}
\Box ={}& \pt_s^2 - 2s^{-1} \langle \mu \rangle^{2}\mu \cdot \nabla_\mu \pt_s
- s^{-2} \Jbr{\mu}^2 \Delta_\mu 	
\label{ab2}\\
	& -3 s^{-2} \langle \mu \rangle^{2} \mu \cdot \nabla_\mu
	-s^{-2}\Jbr{\mu}^2 \sum_{1\le i,j \le 3 }\mu_i \mu_j \pt_i \pt_j
	\nonumber.
\end{align}
Also remark that
\begin{equation}\label{eq:change}
	\norm{f(t,\mu(t,x))}_{L^p_x(|x|<t)} = s^{\frac3p}\norm{\Jbr{\mu}^{-\frac5p}
	f(s,\mu)}_{L^p_\mu(\R^3)}
\end{equation}
for any $p \in (0,\I]$. 

\subsection{Preliminaries}\label{subsec:pre}

We collect preliminary estimates.
\begin{lemma}\label{lem:mainest0}
For $n,m\in \R$,
\begin{multline*}
	(\Box_{t,x}+1) (s^{-m}e^{i n \Jbr{\mu}^{-1} s})
	= -(n^2-1) s^{-m}e^{i n \Jbr{\mu}^{-1} s}
	\\ -in (2m -d)s^{-m-1} \Jbr{\mu} e^{i n \Jbr{\mu}^{-1} s}
	+m(m+1) s^{-m-2}e^{i n \Jbr{\mu}^{-1} s} ,
\end{multline*}
where $d=3$ is the spatial dimension.
\end{lemma}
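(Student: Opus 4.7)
The statement is a purely algebraic identity, and the plan is to prove it by direct computation using the hyperbolic-coordinate representation \eqref{ab2} of $\Box$. Set $\phi(s,\mu) := e^{in\Jbr{\mu}^{-1}s}$ and $f(s,\mu) := s^{-m}\phi$. I will evaluate each of the five terms of $\Box f$ appearing in \eqref{ab2} separately, add $f$ to account for the $+1$, and group the result by powers $s^{-m-k}$ with $k=0,1,2$.

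All the bookkeeping is driven by three elementary identities:
\begin{equation*}
\partial_{\mu_i}\Jbr{\mu}^{-1} = -\mu_i\Jbr{\mu}^{-3}, \quad \mu\cdot\nabla_\mu\Jbr{\mu}^{-1} = -\Jbr{\mu}^{-1}+\Jbr{\mu}^{-3}, \quad |\mu|^2 = \Jbr{\mu}^2-1.
\end{equation*}
The last is the reason every intermediate power of $\Jbr{\mu}^{-1}$ eventually telescopes. Differentiating in $s$ yields
$\partial_s^2 f = m(m+1)s^{-m-2}\phi - 2imn\Jbr{\mu}^{-1}s^{-m-1}\phi - n^2\Jbr{\mu}^{-2}s^{-m}\phi$, which already accounts for the $s^{-m-2}$ coefficient on the right-hand side. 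Then I will compute $\mu\cdot\nabla_\mu\partial_s f$ by the product rule and multiply by $-2s^{-1}\Jbr{\mu}^2$; the first-order piece $-3s^{-2}\Jbr{\mu}^2\mu\cdot\nabla_\mu f$; and the two spatial second-order terms $-s^{-2}\Jbr{\mu}^2\Delta_\mu f$ and $-s^{-2}\Jbr{\mu}^2\sum_{i,j}\mu_i\mu_j\partial_{\mu_i}\partial_{\mu_j}f$, using the identities above at each step. Each output is a sum of monomials of the form $s^{-m-k}\Jbr{\mu}^{-\ell}\phi$ with $k\in\{0,1\}$ (the $k=2$ level arises only from $\partial_s^2$).

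To finish, I will group the collected monomials by $k$. At order $k=0$, the $n^2$-pieces from $\partial_s^2 f$, the cross term, and the Hessian term combine --- using $|\mu|^2 = \Jbr{\mu}^2-1$ to absorb the $\Jbr{\mu}^{-2}$ and $\Jbr{\mu}^{-4}$ contributions --- and, together with the $+f$ from $(\Box+1)$, produce $-(n^2-1)s^{-m}\phi$. At order $k=1$, the pieces linear in $n$ arising from $\partial_s^2 f$, the cross term, and the derivatives of $\mu_i\Jbr{\mu}^{-3}$ in the Hessian collapse (again via $|\mu|^2 = \Jbr{\mu}^2-1$) to $-in(2m-d)\Jbr{\mu}s^{-m-1}\phi$ with $d=3$. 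The $s^{-m-2}$ contribution has already been identified.

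The only real obstacle is the bookkeeping: each spatial term generates several $\Jbr{\mu}^{-\ell}$'s, and I must verify that the $\Jbr{\mu}^2$ prefactors in \eqref{ab2}, combined across all five terms, leave exactly the claimed simplification --- in particular, that the $k=1$ contributions collapse to a single positive power $\Jbr{\mu}^{+1}$ rather than lingering at $\Jbr{\mu}^{-1}$. There is no analytic subtlety; the calculation is lengthy but routine.
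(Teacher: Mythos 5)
Your plan is correct and matches the paper's own (one-line) proof, which simply says the identity follows by direct calculation. I checked the bookkeeping: the $s^{-m-2}$ coefficient comes from $\partial_s^2$ as you say; at the $s^{-m}$ level the pieces $-n^2\Jbr{\mu}^{-2}$, $-2n^2|\mu|^2\Jbr{\mu}^{-2}$, $n^2|\mu|^2\Jbr{\mu}^{-4}$, $n^2|\mu|^4\Jbr{\mu}^{-4}$ plus the $+1$ from $(\Box+1)$ telescope to $-(n^2-1)$; and at the $s^{-m-1}$ level the $\Jbr{\mu}^{-1}$ and $\Jbr{\mu}^{-3}$ monomials cancel exactly, leaving $in(3-2m)\Jbr{\mu}=-in(2m-d)\Jbr{\mu}$ with $d=3$ (the coefficient $3$ here is the sum of the $d=3$ from $\nabla_\mu\cdot\mu$ in the Laplacian and the other first-order contributions, matching the dimension), so the computation closes as you claim.
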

\begin{proof} It follows by direct calculation.
\end{proof}

Recall that $\widetilde\rho(s,\mu)=
\sqrt{\rho(\mu)^2+s^{-1}\langle{\mu}\rangle^{-3}}$.
An elementary inequality
\begin{equation}\label{eq:trho_est}
	\max(\rho (\mu), s^{-\frac12} \Jbr{\mu}^{-\frac32}) \le \widetilde\rho(s,\mu) \le
	 \rho(\mu) + s^{-\frac12} \Jbr{\mu}^{-\frac32}
\end{equation}
will be useful.
The following will be used to estimate the error comes from the phase modification.
\begin{lemma}\label{lem:mainest1_pf1} 
For $s\ge3$, we have the following inequality:
\[
	\rho(\mu) (\widetilde\rho(s,\mu)^\frac23 - \rho(\mu)^\frac23) \lesssim
	s^{-\frac56} \Jbr{\mu}^{-\frac52},
\]
where the implicit constant is independent of $\rho$.
\end{lemma}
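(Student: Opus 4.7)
My plan is to reduce the claim to a clean elementary inequality in two nonnegative parameters and then dispatch it by a case split.

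First I will abbreviate $a=\rho(\mu)\ge 0$ and $b=s^{-1/2}\Jbr{\mu}^{-3/2}$, so that $\widetilde\rho(s,\mu)^2 = a^2+b^2$ and the right-hand side of the target inequality equals $b^{5/3}$. The claim thus amounts to the purely algebraic statement
\begin{equation*}
  a\bigl((a^2+b^2)^{1/3} - a^{2/3}\bigr) \lesssim b^{5/3}
  \qquad \text{for all } a,b\ge 0,
\end{equation*}
with a universal implicit constant; the dependence on $\rho$ and $\Jbr{\mu}$ disappears, so the independence from $\rho$ claimed in the statement is automatic.

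Next I will record two standard bounds on the increment $(a^2+b^2)^{1/3} - a^{2/3}$ coming from the concavity and subadditivity of $x\mapsto x^{1/3}$ on $[0,\infty)$. The subadditive bound $(a^2+b^2)^{1/3}\le a^{2/3}+b^{2/3}$ yields $(a^2+b^2)^{1/3} - a^{2/3}\le b^{2/3}$, valid for all $a,b\ge 0$. The mean value theorem applied to $x\mapsto x^{1/3}$ on $[a^2,a^2+b^2]$, using that $x^{-2/3}$ is decreasing, yields $(a^2+b^2)^{1/3} - a^{2/3}\le \tfrac{1}{3} a^{-4/3} b^2$, valid whenever $a>0$.

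Then I will split on whether $a\le b$ or $a>b$. In the regime $a\le b$, the subadditive bound gives $a\bigl((a^2+b^2)^{1/3}-a^{2/3}\bigr)\le a\,b^{2/3}\le b^{5/3}$. In the regime $a>b>0$, the mean value bound gives
\begin{equation*}
  a\bigl((a^2+b^2)^{1/3}-a^{2/3}\bigr) \le \frac{b^2}{3 a^{1/3}} \le \frac{b^{5/3}}{3},
\end{equation*}
using $a^{-1/3}\le b^{-1/3}$. The degenerate case $b=0$ is trivial. Combining the two cases and unwinding $b^{5/3}=s^{-5/6}\Jbr{\mu}^{-5/2}$ produces the asserted inequality. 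There is no real obstacle here; the only point worth flagging is that one must avoid a naive attempt to use the mean value bound alone, which blows up as $a\to 0$, whence the case split based on comparing $a$ to $b$.
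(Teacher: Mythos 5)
Your proof is correct, and it takes a genuinely different route from the paper. The paper writes the increment via the fundamental theorem of calculus,
\[
\widetilde\rho^{\frac23}-\rho^{\frac23}=\frac13\int_0^1\bigl(\rho^2+\theta\, s^{-1}\Jbr{\mu}^{-3}\bigr)^{-\frac23} s^{-1}\Jbr{\mu}^{-3}\,d\theta,
\]
and then, inside the integral, interpolates the base between the two terms it dominates: since $\rho^2+\theta s^{-1}\Jbr{\mu}^{-3}\ge (\rho^2)^{3/4}(\theta s^{-1}\Jbr{\mu}^{-3})^{1/4}$, one gets $(\rho^2+\theta s^{-1}\Jbr{\mu}^{-3})^{-2/3}\le(\rho^2)^{-1/2}(\theta s^{-1}\Jbr{\mu}^{-3})^{-1/6}$. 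The exponent $3/4$ is chosen precisely so that the prefactor $\rho$ cancels and the $\theta^{-1/6}$ integral converges, yielding $Cs^{-5/6}\Jbr{\mu}^{-5/2}$ in one stroke without a case split. You instead use a dichotomy $a\le b$ versus $a>b$ (with $a=\rho$, $b=s^{-1/2}\Jbr{\mu}^{-3/2}$), covering the small-$a$ regime by subadditivity of the cube root and the large-$a$ regime by the mean-value bound. Both arguments are elementary and correct; the paper's interpolation trick is slicker and produces an explicit constant in one line, while your case split is conceptually more transparent about where each of the two standard bounds on $x\mapsto x^{1/3}$ is sharp, and it makes visible why a naive single application of the mean-value bound fails as $\rho\to 0$. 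Both proofs should, strictly speaking, note that the degenerate case $\rho(\mu)=0$ is trivial (you do; the paper's line with $(\rho^2)^{-1/2}$ requires this caveat).
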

\begin{proof}
By a direct calculation, we obtain 
\begin{align*}
	&\rho(\mu)(\widetilde\rho(s,\mu)^\frac23 - \rho(\mu)^\frac23)\\
	&= \frac13 \rho(\mu) \int_0^1 (\rho(\mu)^2 + \theta s^{-1}\Jbr{\mu}^{-3})^{-\frac23} s^{-1}\Jbr{\mu}^{-3} d\theta\\
	&\le \frac13 \rho(\mu) \int_0^1 (\rho(\mu)^2)^{-\frac12} (\theta s^{-1}\Jbr{\mu}^{-3})^{-\frac16} s^{-1}\Jbr{\mu}^{-3} d\theta
	= C s^{-\frac56}\Jbr{\mu}^{-\frac52}
\end{align*}
for every $\mu \in \R^3$. Thus we obtain the desired inequality. 
\end{proof}

Now, we turn to the estimate of the derivatives of the modified phase part
$\widetilde\Psi(s,\rho)=-(\l c_1/2) \widetilde\rho(s,\mu)^{2/3}$.
\begin{lemma}\label{lem:mainest1_pf2}
For $s\ge3$, we have the following inequalities:
\begin{equation}\label{eq:estPsis0}
	|\pt_s \widetilde\Psi(s,\mu)| \lesssim s^{-\frac43}\Jbr{\mu}^{-1},
\end{equation}
\begin{equation}\label{eq:estPsis}
	| \rho(\mu)\pt_s \widetilde\Psi(s,\mu)| \lesssim s^{-1-\frac{5}6}\Jbr{\mu}^{-\frac52},
\end{equation}
\begin{equation}\label{eq:estPsis2}
	| \rho(\mu)(\pt_s \widetilde\Psi(s,\mu))^2| \lesssim s^{- 2 - \frac76 }\Jbr{\mu}^{-\frac72},
\end{equation}
\begin{equation}\label{eq:estPsiPsis}
	| \rho(\mu)^\frac23\widetilde\Psi(s,\mu)\pt_s \widetilde\Psi(s,\mu)| \lesssim s^{- 2 }\Jbr{\mu}^{-3},
\end{equation}
\begin{equation}\label{eq:estPsiss}
	| \rho(\mu)\pt_s^2 \widetilde\Psi(s,\mu)| \lesssim  s^{-2-\frac56}\Jbr{\mu}^{-\frac52},
\end{equation}
where the implicit constants are independent of $\rho$.
\end{lemma}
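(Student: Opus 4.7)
The plan is to write $\widetilde\Psi(s,\mu) = -\tfrac{\lambda c_1}{2} f(s,\mu)^{1/3}$ with $f(s,\mu) := \widetilde\rho(s,\mu)^2 = \rho(\mu)^2 + s^{-1}\Jbr{\mu}^{-3}$, so that $\partial_s f = -s^{-2}\Jbr{\mu}^{-3}$ and $\partial_s^2 f = 2 s^{-3}\Jbr{\mu}^{-3}$. Differentiating yields, up to harmless constants,
\[
 \partial_s \widetilde\Psi \;=\; \tfrac{\lambda c_1}{6}\, s^{-2}\Jbr{\mu}^{-3}\, f^{-2/3},\qquad
 \partial_s^2 \widetilde\Psi \;=\; -\tfrac{\lambda c_1}{3}\, s^{-3}\Jbr{\mu}^{-3}\, f^{-2/3} + \tfrac{\lambda c_1}{9}\, s^{-4}\Jbr{\mu}^{-6}\, f^{-5/3}.
\]
Each of the five estimates then reduces to bounding an expression of the form $\rho^{\sigma} f^{-\alpha}$ (with $\sigma \in \{0,2/3,1\}$) by a product of powers of $s$ and $\Jbr{\mu}$ only.

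The key tool is the following interpolation consequence of \eqref{eq:trho_est}. Since $f \ge \rho^2$ and $f \ge s^{-1}\Jbr{\mu}^{-3}$, one has $f \ge \rho^{2\theta}(s^{-1}\Jbr{\mu}^{-3})^{1-\theta}$ for every $\theta \in [0,1]$ (because the max of two nonnegative quantities dominates any weighted geometric mean), hence
\[
 f^{-\alpha} \;\le\; \rho^{-2\theta\alpha}\, s^{(1-\theta)\alpha}\, \Jbr{\mu}^{3(1-\theta)\alpha} \qquad \text{for all }\alpha\ge 0.
\]
In each of the five cases I would choose $\theta$ so as to kill the $\rho$-factor arising from the prefactor (thereby producing a bound uniform in $\rho$), and then read off the resulting powers of $s$ and $\Jbr{\mu}$. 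For \eqref{eq:estPsis0} one has no $\rho$-prefactor, so one takes $\theta=0$ and obtains $f^{-2/3} \le s^{2/3}\Jbr{\mu}^2$. For \eqref{eq:estPsis} the choice $\theta=3/4$ gives $\rho\, f^{-2/3} \lesssim s^{1/6}\Jbr{\mu}^{1/2}$, hence the claimed $s^{-11/6}\Jbr{\mu}^{-5/2}$. For \eqref{eq:estPsis2} the choice $\theta=3/8$ gives $\rho\, f^{-4/3} \lesssim s^{5/6}\Jbr{\mu}^{5/2}$. For \eqref{eq:estPsiPsis} the factor $\widetilde\Psi$ contributes an extra $f^{1/3}$, so the product becomes $\rho^{2/3} s^{-2}\Jbr{\mu}^{-3} f^{-1/3}$, and $\theta=1$ exactly cancels $\rho^{2/3}$ via $\rho^{2/3} f^{-1/3} \le 1$. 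For \eqref{eq:estPsiss}, $\theta=3/4$ handles the $f^{-2/3}$-term and $\theta=3/10$ handles the $f^{-5/3}$-term, both yielding $s^{-17/6}\Jbr{\mu}^{-5/2}$.

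I do not anticipate a genuine obstacle: the whole argument is elementary power-counting once the interpolated bound on $f^{-\alpha}$ is in hand. The only subtle point worth flagging is that naively using only the lower bound $f \ge \rho^2$ produces spurious negative powers of $\rho$ that ruin the uniformity of the constants in $\rho$; it is essential to trade part of $f$ against the other lower bound $f\ge s^{-1}\Jbr{\mu}^{-3}$ via the parameter $\theta$. All implicit constants depend only on $|\lambda|$ and $c_1$, and the restriction $s\ge 3$ is used only to ensure $\log t$-type factors, not needed here, play no role.
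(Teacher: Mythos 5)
Your proof is correct and is essentially the same as the paper's: the paper records the identities $\pt_s\widetilde\Psi = C s^{-2}\Jbr{\mu}^{-3}\widetilde\rho^{-4/3}$ and $\pt_s^2\widetilde\Psi = C_1 s^{-3}\Jbr{\mu}^{-3}\widetilde\rho^{-4/3}+C_2 s^{-4}\Jbr{\mu}^{-6}\widetilde\rho^{-10/3}$ (identical to yours, with $\widetilde\rho^{-2}=f^{-1}$) and says the estimates "follow from \eqref{eq:trho_est}," and your weighted geometric-mean step $f\ge\rho^{2\theta}(s^{-1}\Jbr{\mu}^{-3})^{1-\theta}$ is simply the explicit form of how one uses the lower bound $\widetilde\rho\ge\max(\rho,s^{-1/2}\Jbr{\mu}^{-3/2})$. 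Your power-counting in all five cases checks out, so you have merely filled in the details the paper leaves implicit.
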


\begin{proof}
The first four inequalities (\ref{eq:estPsis0})-(\ref{eq:estPsiPsis}) 
follow from
\[
	\pt_s \widetilde\Psi(s,\mu) 
	=Cs^{-2} \Jbr{\mu}^{-3}\widetilde\rho(s,\mu)^{-\frac43},
\]
and \eqref{eq:trho_est}.
Similarly,
\[
	\pt_s^2 \widetilde\Psi(s,\mu) = C_1 s^{-3}\Jbr{\mu}^{-3}\widetilde\rho(s,\mu)^{-\frac43}
	+C_2 s^{-4}\Jbr{\mu}^{-6} \widetilde\rho(s,\mu)^{-\frac{10}3},
\]
yields the last inequality (\ref{eq:estPsiss}).
\end{proof}

\begin{lemma}\label{lem:mainest1_pf3}
For $s\ge3$, we have the following inequalities:
\begin{equation}\label{eq:mupsi}
	|\pt_{\mu_j} \widetilde\Psi(s,\mu)| \lesssim 
	s^{\frac16}\Jbr{\mu}^\frac12 |\pt_{\mu_j} \rho(\mu)| + s^{-\frac13}\Jbr{\mu}^{-2},
\end{equation}
\begin{equation}\label{eq:Amupsimupsi}
	|\rho(\mu)^\frac23 \pt_{\mu_j} \widetilde\Psi(s,\mu) \pt_{\mu_k} \widetilde\Psi(s,\mu)| \lesssim 
	|\nabla_\mu \rho(\mu)|^2+s^{-1}\Jbr{\mu}^{-5},
\end{equation}
\begin{equation}\label{eq:Amumupsi}
	|\rho(\mu)\pt_{\mu_j}\pt_{\mu_k} \widetilde\Psi(s,\mu)| \lesssim 
	\rho(\mu)^{\frac23} |\nabla_\mu^2 \rho(\mu)|
	+s^{\frac16} \Jbr{\mu}^\frac12|\nabla_\mu \rho(\mu)|^2+s^{-\frac13}\Jbr{\mu}^{-3}\rho(\mu),
\end{equation}
\begin{equation}\label{eq:Amuspsi}
	|\rho(\mu)\pt_{\mu_j}\pt_{s} \widetilde\Psi(s,\mu)| \lesssim s^{-\frac43}\Jbr{\mu}^{-1} |\pt_{\mu_j} \rho(\mu)|
	+s^{-\frac43}\Jbr{\mu}^{-2}\rho(\mu),
\end{equation}
where the implicit constants are independent of $\rho$.
\end{lemma}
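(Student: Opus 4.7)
The plan is to differentiate $\widetilde\Psi = K\widetilde\rho^{2/3}$ (with $K := -\lambda\Gamma(11/6)/(\sqrt\pi\,\Gamma(7/3))$ and $\widetilde\rho^2 = \rho(\mu)^2 + s^{-1}\Jbr{\mu}^{-3}$) by the chain rule, then repeatedly invoke the scalar bounds $\widetilde\rho^{-a}\le\rho^{-a}$ and $\widetilde\rho^{-a}\le s^{a/2}\Jbr{\mu}^{3a/2}$ ($a>0$) coming from \eqref{eq:trho_est}. A single computation gives the template
\[
\pt_{\mu_j}\widetilde\Psi = \tfrac{2K}{3}\widetilde\rho^{-4/3}P_j, \qquad P_j := \rho\,\pt_{\mu_j}\rho - \tfrac{3}{2}s^{-1}\mu_j\Jbr{\mu}^{-5},
\]
from which \eqref{eq:mupsi} is immediate: the first piece of $P_j$ contributes $\rho\widetilde\rho^{-4/3}|\pt_{\mu_j}\rho|\le\widetilde\rho^{-1/3}|\pt_{\mu_j}\rho|\le s^{1/6}\Jbr{\mu}^{1/2}|\pt_{\mu_j}\rho|$, while the second contributes $\widetilde\rho^{-4/3}s^{-1}|\mu_j|\Jbr{\mu}^{-5}\le s^{-1/3}\Jbr{\mu}^{-2}$ after using $\widetilde\rho^{-4/3}\le s^{2/3}\Jbr{\mu}^{2}$ and $|\mu_j|\le\Jbr{\mu}$.

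For \eqref{eq:Amupsimupsi} I would expand $\rho^{2/3}\pt_{\mu_j}\widetilde\Psi\,\pt_{\mu_k}\widetilde\Psi$ into three kinds of monomials. The $\rho^{8/3}\widetilde\rho^{-8/3}\pt_{\mu_j}\rho\,\pt_{\mu_k}\rho$ piece is controlled by $|\nabla_\mu\rho|^2$ via $\rho^{8/3}\widetilde\rho^{-8/3}\le 1$; the pure-regularization piece $\rho^{2/3}\widetilde\rho^{-8/3}s^{-2}\Jbr{\mu}^{-10}\mu_j\mu_k$ is $\le s^{-1}\Jbr{\mu}^{-5}$ via $\rho^{2/3}\widetilde\rho^{-8/3}\le\widetilde\rho^{-2}\le s\Jbr{\mu}^{3}$ together with $|\mu_j\mu_k|\le\Jbr{\mu}^{2}$; the remaining cross term is absorbed by Young's inequality after bounding its prefactor by $\rho^{5/3}\widetilde\rho^{-8/3}\le\widetilde\rho^{-1}\le s^{1/2}\Jbr{\mu}^{3/2}$. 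A second differentiation then produces
\[
\pt_{\mu_j}\pt_{\mu_k}\widetilde\Psi = \tfrac{2K}{3}\bigl(-\tfrac{4}{3}\widetilde\rho^{-10/3}P_jP_k + \widetilde\rho^{-4/3}\pt_{\mu_k}P_j\bigr),
\]
with $\pt_{\mu_k}P_j = \pt_{\mu_j}\rho\,\pt_{\mu_k}\rho + \rho\,\pt_{\mu_j}\pt_{\mu_k}\rho - \tfrac{3}{2}s^{-1}\delta_{jk}\Jbr{\mu}^{-5} + \tfrac{15}{2}s^{-1}\mu_j\mu_k\Jbr{\mu}^{-7}$.

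For \eqref{eq:Amumupsi} I would multiply the above by $\rho$ and route each monomial into one of the three admissible targets: $\rho^2\widetilde\rho^{-4/3}|\pt_{\mu_j}\pt_{\mu_k}\rho|\le\rho^{2/3}|\nabla^2_\mu\rho|$; the $|\nabla\rho|^2$-type contributions $\rho\widetilde\rho^{-4/3}|\pt_{\mu_j}\rho||\pt_{\mu_k}\rho|$ and $\rho^3\widetilde\rho^{-10/3}|\pt_{\mu_j}\rho||\pt_{\mu_k}\rho|$ both yield $s^{1/6}\Jbr{\mu}^{1/2}|\nabla_\mu\rho|^2$ via $\rho\widetilde\rho^{-4/3}\le\widetilde\rho^{-1/3}$ and $\rho^3\widetilde\rho^{-10/3}\le\widetilde\rho^{-1/3}$; and the remaining purely-regularization pieces, all of the form $\rho\widetilde\rho^{-4/3}s^{-1}\Jbr{\mu}^{-5}$ or $\rho\widetilde\rho^{-10/3}s^{-2}\Jbr{\mu}^{-10}\mu_j\mu_k$, collapse to $s^{-1/3}\Jbr{\mu}^{-3}\rho$ via a short case split on whether $\rho^2 \ge s^{-1}\Jbr{\mu}^{-3}$ (Case 1, where $\widetilde\rho\sim\rho$) or not (Case 2, where $\widetilde\rho\sim s^{-1/2}\Jbr{\mu}^{-3/2}$). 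The genuinely mixed term $\rho^2\widetilde\rho^{-10/3}s^{-1}\Jbr{\mu}^{-5}\mu_k\pt_{\mu_j}\rho$ is handled by weighted AM-GM with weights fixed by homogeneity: one half feeds into $s^{1/6}\Jbr{\mu}^{1/2}|\nabla\rho|^2$ and the other gives $s^{-5/6}\Jbr{\mu}^{-9/2}$, which in Case 1 is $\le s^{-1/3}\Jbr{\mu}^{-3}\rho$ thanks to $\rho\ge s^{-1/2}\Jbr{\mu}^{-3/2}$, while in Case 2 it is handled directly from $\rho^2\widetilde\rho^{-10/3}\le \rho^2 s^{5/3}\Jbr{\mu}^{5}$ and $\rho^2\le s^{-1}\Jbr{\mu}^{-3}$.

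Finally, for \eqref{eq:Amuspsi} I would use $\pt_{\mu_j}\pt_s\widetilde\Psi = \pt_s\pt_{\mu_j}\widetilde\Psi$ and differentiate the template in $s$; only $\widetilde\rho^{-4/3}$ (producing $\tfrac{2}{3}s^{-2}\Jbr{\mu}^{-3}\widetilde\rho^{-10/3}$) and the explicit $s^{-1}$ factor in $P_j$ contribute. After multiplying by $\rho$, the three resulting monomials give $\rho^2\widetilde\rho^{-10/3}s^{-2}\Jbr{\mu}^{-3}|\pt_{\mu_j}\rho|\le s^{-4/3}\Jbr{\mu}^{-1}|\pt_{\mu_j}\rho|$ via $\rho^2\widetilde\rho^{-10/3}\le\widetilde\rho^{-4/3}\le s^{2/3}\Jbr{\mu}^{2}$, and the other two purely-regularization terms yield $s^{-4/3}\Jbr{\mu}^{-2}\rho$ by the same Case 1/Case 2 split used above. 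The hard part will be the bookkeeping in \eqref{eq:Amumupsi}: each of the half-dozen mixed monomials from $\rho\widetilde\rho^{-10/3}P_jP_k$ has to be steered into the correct target, and the AM-GM weights must reproduce exactly the homogeneities $s^{1/6}\Jbr{\mu}^{1/2}$ and $s^{-1/3}\Jbr{\mu}^{-3}$; fortunately those weights are dictated uniquely by homogeneity, so once organized in advance the calculation is routine.
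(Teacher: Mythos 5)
Your proposal is correct and follows the same route as the paper: compute $\pt_{\mu_j}\widetilde\Psi$, $\pt_{\mu_j}\pt_{\mu_k}\widetilde\Psi$, $\pt_{\mu_j}\pt_s\widetilde\Psi$ explicitly and steer each monomial through \eqref{eq:trho_est}; the paper merely records the derivative formulas and leaves the monomial-by-monomial bookkeeping (including the Young/AM-GM and the $\rho^2\gtrless s^{-1}\Jbr{\mu}^{-3}$ dichotomy for the cross terms) to the reader, which you have filled in correctly. A very minor remark: for the pure-regularization monomials in \eqref{eq:Amumupsi} and for both such terms in \eqref{eq:Amuspsi}, the direct bound $\widetilde\rho^{-a}\le s^{a/2}\Jbr{\mu}^{3a/2}$ already gives the target, so the case split is only genuinely needed for the mixed $\rho^2\widetilde\rho^{-10/3}s^{-1}\Jbr{\mu}^{-5}\mu_k\pt_{\mu_j}\rho$ term.
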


\begin{proof}
The first two inequalities (\ref{eq:mupsi}) and 
(\ref{eq:Amupsimupsi}) follow from
\[
	\pt_{\mu_j} \widetilde\Psi 
	=  C_1 \widetilde\rho(s,\mu)^{-\frac43} (\rho(\mu) \pt_{\mu_j} \rho(\mu) -3 s^{-1}\Jbr{\mu}^{-5}\mu_j)
\]
and \eqref{eq:trho_est}.
To obtain the inequality (\ref{eq:Amumupsi}), we use
\begin{align*}
	\pt_{\mu_j}\pt_{\mu_k} \widetilde\Psi
	={}& C_2 \widetilde\rho(s,\mu)^{-\frac{10}3} (\rho(\mu) \pt_{\mu_j} \rho(\mu) -3 s^{-1}\Jbr{\mu}^{-5}\mu_j)\\
&\qquad \qquad \qquad \times (\rho(\mu) \pt_{\mu_k} \rho(\mu) -3 s^{-1}\Jbr{\mu}^{-5}\mu_k)\\
	&{}+C_1 \widetilde\rho (s,\mu)^{-\frac{4}3} (\pt_{\mu_j} \rho(\mu)\pt_{\mu_k} \rho(\mu)+\rho(\mu)\pt_{\mu_j}\pt_{\mu_k} \rho(\mu))\\
	&{}+C_1 \widetilde\rho (s,\mu)^{-\frac{4}3}(15s^{-1}\Jbr{\mu}^{-7}\mu_j\mu_k -3 s^{-1} \Jbr{\mu}^{-5}\delta_{jk}) ,
\end{align*}
where $\delta_{jk}$ is the Kronecker delta.
The inequality (\ref{eq:Amuspsi}) is a consequence of
\begin{multline*}
	\pt_{\mu_j}\pt_{s} \widetilde\Psi
	=C_3 s^{-2}\Jbr{\mu}^{-3}\widetilde\rho (s,\mu)^{-\frac{10}3}(\rho(\mu) \pt_{\mu_j} \rho(\mu)-3 s^{-1}\Jbr{\mu}^{-5}\mu_j)\\
	+3 C_1 s^{-2}\widetilde\rho (s,\mu)^{-\frac{4}3}\Jbr{\mu}^{-5}\mu_j.
\end{multline*}
This completes the proof of Lemma \ref{lem:mainest1_pf3}.
\end{proof}

\subsection{Proof of \eqref{eq:A3}}\label{subsec:A3}

We now show that $A$ satisfies \eqref{eq:A3} for $\gamma<5/6$.
\begin{proof}
Since
\begin{equation*}
	|e^{-\frac{ic_1\l}{2} \widetilde\rho(s,\mu)^\frac23\log t}
	- e^{-\frac{ic_1\l}{2} \rho(\mu)^\frac23 \log t}|
	\lesssim C(\widetilde\rho(s,\mu)^\frac23 - \rho(\mu)^\frac23 ) \log t
\end{equation*}
for $t\ge3$,
we deduce from \eqref{eq:change} and Lemma \ref{lem:mainest1_pf1} that
\[
	\norm{\widetilde{u}_{\mathrm{ap}}-u_{\mathrm{ap}}}_{L^2_x}
	\lesssim  (\log t)
\norm{\Jbr{\mu}^{-1} \rho(\mu)(\widetilde\rho(s,\mu)^\frac23 
- \rho(\mu)^\frac23)}_{L^2_\mu(\R^3)}
	\lesssim t^{-\frac56}\log t
\]
and
\[
	\norm{\widetilde{u}_{\mathrm{ap}}-u_{\mathrm{ap}}}_{L^{\frac{10}3}_x}
	\lesssim t^{-\frac35 -\frac56}\log t.
\]
Further, we see from \eqref{eq:trho_est} that
\[
	\norm{\widetilde{v}_{\mathrm{ap}}}_{L^2_x(|x|<t)}
	\lesssim 
	t^{-1} \norm{\rho}_{L^{\frac{10}3}_\mu}^{\frac53} + t^{-\frac43}\norm{\rho}_{L^2_\mu}
\]
and
\[
	\norm{\widetilde{v}_{\mathrm{ap}}}_{L^{\frac{10}{3}}_x(|x|<t)}
	\lesssim 
	t^{-\frac35} (t^{-1} \norm{\langle\mu\rangle^{\frac35}
	\rho}_{L^{\frac{50}9}_\mu}^{\frac53} + t^{-\frac43}\norm{\rho}_{L^2_\mu}).
\]
Similarly, we have 
\[
\norm{\nabla_x\widetilde{u}_{\mathrm{ap}}-\nabla_xu_{\mathrm{ap}}}_{L^2_x}
+ \norm{\nabla_x\widetilde{v}_{\mathrm{ap}}}_{L^2_x}
\lesssim
t^{-\frac56}(\log t) \JBR{\norm{(\phi_0,\phi_1)}_{Y}}^\frac73.
\]
Indeed, in view of \eqref{ptx}, the leading term with respect to $t$ appears only when the derivative
$\nabla_x$ hits $e^{in\Jbr{\mu}^{-1}t}$.
Furthermore, in that case, $\nabla_x e^{in\Jbr{\mu}^{-1}t}=-in\mu e^{i\Jbr{\mu}^{-1}t}$
and so the estimate is essentially the same.

Combining these estimates, we conclude that $A$ satisfies \eqref{eq:A3} 
as long as $\gamma<5/6$.
\end{proof}


\subsection{Proof of \eqref{eq:A2}}\label{subsec:A2}

To complete the proof of Proposition \ref{prop:mainest0},
we prove $A$ satisfies the condition \eqref{eq:A2} for $\gamma<5/6$.
Note that
\[
	(\Box+1)A- N(A) = ((\Box+1)\widetilde{u}_{\mathrm{ap}}-N_{\mathrm{r}})
	+((\Box+1)\widetilde{v}_{\mathrm{ap}}-N_{\mathrm{nr}})
	+(N(\widetilde{u}_{\mathrm{ap}})-N(A)).
\]
The third term of the right hand side is estimated as
\begin{align*}
\|N(A)-N(\widetilde{u}_{\mathrm{ap}})\|_{L_{x}^{2}}
\lesssim{}&
(\|\widetilde{u}_{\mathrm{ap}}\|_{L_{x}^{\infty}}
+\|\widetilde{v}_{\mathrm{ap}}\|_{L_{x}^{\infty}})^\frac23
\|\widetilde{v}_{\mathrm{ap}}\|_{L_{x}^2}\\
\lesssim{}&  t^{-2} \JBR{\norm{\Jbr{\cdot}^3 \rho e^{i\beta}}_{H^2}}^{3}.
\end{align*}
Hence, we estimate the first and the second terms, in what follows.

\begin{proposition}\label{prop:mainest1}
For  $t\ge3$,
\begin{equation}
	\norm{(\Box_{t,x}+1)\widetilde{u}_{\mathrm{ap}} - N_{\mathrm{r}} }_{L^2_x(|x|<t)}\label{eq:1st}
	\lesssim t^{-1-\frac56} (\log t)\JBR{\norm{(\phi_0,\phi_1)}_Y}^{\frac73} 
\end{equation}
holds.
\end{proposition}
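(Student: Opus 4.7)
The plan is to compute $(\Box+1)\widetilde{u}_{\mathrm{ap}}$ directly in the $(s,\mu)$-coordinates of \eqref{ptt}--\eqref{ab2} and to recognize that the phase modifier $\widetilde{\Psi}$ is engineered so that the principal $s^{-5/2}$-contribution of $(\Box+1)\widetilde{u}_{\mathrm{ap}}$ equals $N_{\mathrm{r}}$. Writing $\widetilde{u}_{\mathrm{ap}}=\Re(g\,e^{i\Phi})$ with $g(s,\mu)=s^{-3/2}\Jbr{\mu}^{3/2}\rho(\mu)$ and $\Phi=\Jbr{\mu}^{-1}s+\widetilde{\Psi}\log s+\beta$, the identity
\[
(\Box+1)(g e^{i\Phi})=\Bigl\{\Box g+g\bigl[1-(\pt_t\Phi)^2+|\nabla_x\Phi|^2\bigr]+i\bigl(g\Box\Phi+2[\pt_t g\,\pt_t\Phi-\nabla_x g\cdot\nabla_x\Phi]\bigr)\Bigr\}e^{i\Phi}
\]
splits $(\Box+1)\widetilde{u}_{\mathrm{ap}}$ into a $\cos\Phi$-part $A$ and a $\sin\Phi$-part $-B$, with $A,B$ real. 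I would expand $\pt_t\Phi,\nabla_x\Phi$ via \eqref{ptt}--\eqref{ptx} and organize by powers of $s^{-1}$, $\log s$, and derivatives of $\widetilde{\Psi},\rho,\beta$. The leading $O(s^{-3/2})$-contribution to $A$ cancels by the eikonal identity $(\pt_t\Phi_0)^2-|\nabla_x\Phi_0|^2=1$ for $\Phi_0:=\Jbr{\mu}^{-1}s=\sqrt{t^2-|x|^2}$ (the $n=1,\ m=3/2,\ d=3$ instance of Lemma \ref{lem:mainest0}), while the leading $O(s^{-5/2})$-contribution to $B$ cancels between $g\Box\Phi_0=3g\Jbr{\mu}/s$ and $2(\pt_t g\,\pt_t\Phi_0-\nabla_x g\cdot\nabla_x\Phi_0)=-3g\Jbr{\mu}/s$.

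At the next order of $A$ one computes
\[
1-(\pt_t\Phi)^2+|\nabla_x\Phi|^2=-2\Jbr{\mu}\,\widetilde{\Psi}s^{-1}+\mathcal{R}(s,\mu),
\]
the surviving leading term $-2\Jbr{\mu}\widetilde{\Psi}/s$ coming from pairing the $\Jbr{\mu}$-part of $\pt_t\Phi$ with the $\widetilde{\Psi}/s$-part of $\pt_s(\widetilde{\Psi}\log s)$. A crucial observation is that the $\log s\,\nabla_\mu\widetilde{\Psi}$- and $\nabla_\mu\beta$-cross-terms between $(\pt_t\Phi)^2$ and $|\nabla_x\Phi|^2$ cancel identically; this cancellation is essential since, by \eqref{eq:mupsi}, $\nabla_\mu\widetilde{\Psi}$ may grow as $s^{1/6}$. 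Inserting $\widetilde{\Psi}=-(\lambda c_1/2)\widetilde\rho^{2/3}$ yields
\[
-2g\Jbr{\mu}\widetilde{\Psi}s^{-1}\cos\Phi=\lambda c_1\,s^{-5/2}\Jbr{\mu}^{5/2}\rho\,\widetilde\rho^{2/3}\cos\Phi,
\]
which matches $N_{\mathrm{r}}$ up to the replacement $\rho^{2/3}\leftrightarrow\widetilde\rho^{2/3}$. I would therefore decompose
\[
(\Box+1)\widetilde{u}_{\mathrm{ap}}-N_{\mathrm{r}}=\lambda c_1\,s^{-5/2}\Jbr{\mu}^{5/2}\rho(\widetilde\rho^{2/3}-\rho^{2/3})\cos\Phi+\mathcal{E}(s,\mu),
\]
where $\mathcal{E}$ gathers all genuinely higher-order contributions: products of $\pt_s\widetilde{\Psi},\pt_s^2\widetilde{\Psi},\nabla_\mu\widetilde{\Psi},\nabla_\mu^2\widetilde{\Psi},\nabla_\mu\rho,\nabla_\mu^2\rho,\nabla_\mu\beta,\nabla_\mu^2\beta$ with appropriate powers of $s^{-1}$ and $\log s$.

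The first term in the decomposition is handled by Lemma \ref{lem:mainest1_pf1} and the change of variables \eqref{eq:change}, yielding an $L^2_x$-norm bounded by $C t^{-1-5/6}\|\Jbr{\cdot}^{3/2}\rho\|_{L^\I}^{5/3}$. Each term in $\mathcal{E}$ is estimated by combining Lemmas \ref{lem:mainest1_pf2}--\ref{lem:mainest1_pf3} for the $\widetilde{\Psi}$-derivatives with the $Y$-norm control of weighted Sobolev norms of $\rho$ and $\beta$ (recall that $\rho e^{i\beta}$ is built from $(\phi_0,\phi_1)$); the $(s,\Jbr{\mu})$-weights in those lemmas are precisely what is required to yield $L^2_x$-bounds of order $t^{-1-5/6}\log t$ after \eqref{eq:change}, with constants $\lesssim\Jbr{\|(\phi_0,\phi_1)\|_Y}^{7/3}$, the power $7/3$ arising upon combining an $L^\I$-factor of $\widetilde\rho^{2/3}$ with one or two weighted Sobolev factors of $\rho$ or $\beta$. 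The main obstacle is the bookkeeping: one must exhibit all three cancellations above (the eikonal cancellation in $A$, the $g\Box\Phi_0$--Poynting-type cancellation in $B$, and most subtly the $\nabla_\mu\widetilde{\Psi}$-cancellation in $A$) and then verify that every remaining summand respects the $t^{-1-5/6}$-budget. The Ginibre--Ozawa regularization $\widetilde\rho=\sqrt{\rho^2+s^{-1}\Jbr{\mu}^{-3}}$ is exactly what keeps the estimates of Lemmas \ref{lem:mainest1_pf2}--\ref{lem:mainest1_pf3} uniform in $\mu$ even at points where $\rho$ vanishes, which is indispensable for the argument.
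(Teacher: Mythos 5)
Your proposal is correct and reaches the same decomposition the paper uses, but it gets there by a genuinely different organizational route. The paper factorizes the auxiliary profile as $fh$ with $f=s^{-3/2}e^{i\Jbr{\mu}^{-1}s}$ and $h=\Jbr{\mu}^{3/2}\rho e^{i\beta}e^{i\widetilde\Psi\log s}$, applies the Leibniz rule $\Box(fh)=(\Box f)h+f\Box h+2\pt_t f\pt_t h-2\nabla_xf\cdot\nabla_xh$, and then invokes Lemma \ref{lem:mainest0} (with $n=1$, $m=3/2$, $d=3$) to collapse the $(\Box+1)f$ piece to the single harmless term $\frac{15}{4}s^{-7/2}e^{i\Jbr{\mu}^{-1}s}$; the two leading cancellations you isolate separately (the eikonal identity $(\pt_t\Phi_0)^2-|\nabla_x\Phi_0|^2=1$ and the transport identity $g\Box\Phi_0+2(\pt_tg\,\pt_t\Phi_0-\nabla_xg\cdot\nabla_x\Phi_0)=0$) are encoded in one stroke in the vanishing of $n^2-1$ and $2m-d$ in that lemma. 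Your $\nabla_\mu\widetilde\Psi$-cancellation appears in the paper as the algebraic identity $-I_4$ emerging from the cross term $I_3$, see \eqref{eq:In3}, and your surviving resonance term $-2g\Jbr{\mu}\widetilde\Psi s^{-1}\cos\Phi$ is precisely the first term in \eqref{eq:In3}, leading to the same residual $\lambda c_1 s^{-5/2}\Jbr{\mu}^{5/2}\rho(\widetilde\rho^{2/3}-\rho^{2/3})\Re e^{i(\alpha+\beta)}$ that opens \eqref{eq:mainest1_pf2}. From there the two proofs coincide: \eqref{eq:change} plus Lemma \ref{lem:mainest1_pf1} handles the $\widetilde\rho^{2/3}-\rho^{2/3}$ term, and Lemmas \ref{lem:mainest1_pf2}--\ref{lem:mainest1_pf3} absorb the remaining $J$-terms. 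What your WKB split buys is conceptual transparency — the real/imaginary split cleanly separates the eikonal cancellation (in $A$) from the transport cancellation (in $B$), making it visible which structural features of the linear equation are being used. What the paper's split buys is uniformity in $n$: the same Lemma \ref{lem:mainest0} with general $n$ and $m=5/2$ is reused verbatim in Proposition \ref{prop:mainest2} for the non-resonant correctors $v_n$, so no separate eikonal/transport bookkeeping is needed for each $n\ge 2$.
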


\begin{proof}
To show the inequality (\ref{eq:1st}), we begin with 
the computation of the linear part: 
\[
(\Box+1)\widetilde{u}_{\mathrm{ap}}
=\Re\left[(\Box+1)(s^{-\frac32}\Jbr{\mu}^\frac32\rho(\mu)e^{i\beta}
e^{i\langle\mu\rangle^{-1}s+i\widetilde\Psi(s,\mu)\log s})\right].
\]
We split
\begin{equation}\label{eq:main}
\begin{aligned}
&(\Box+1)(s^{-\frac32} \Jbr{\mu}^\frac32\rho(\mu)e^{i\beta}
e^{i\Jbr{\mu}^{-1}s+i\widetilde\Psi(s,\mu)\log s})\\
=&((\Box+1) s^{-\frac32}e^{i\Jbr{\mu}^{-1}s}) \Jbr{\mu}^\frac32\rho(\mu)e^{i\beta} e^{i\widetilde\Psi(s,\mu)\log s}\\
&+s^{-\frac32}e^{i\Jbr{\mu}^{-1}s} \Box( \Jbr{\mu}^\frac32\rho(\mu)e^{i\beta} e^{i\widetilde\Psi(s,\mu)\log s})\\
&+2\pt_t ( s^{-\frac32}e^{i\Jbr{\mu}^{-1}s})\pt_t( \Jbr{\mu}^\frac32\rho(\mu)e^{i\beta} e^{i\widetilde\Psi(s,\mu)\log s})\\
&-2\nabla_x ( s^{-\frac32}e^{i\Jbr{\mu}^{-1}s}) \cdot \nabla_x( \Jbr{\mu}^\frac32\rho(\mu)e^{i\beta} e^{i\widetilde\Psi(s,\mu)\log s})\\
=:& I_1+ I_2 + I_3 + I_4.
\end{aligned}
\end{equation}
In light of Lemma \ref{lem:mainest0}, we have
\begin{equation}\label{eq:In1}
	I_1 = \frac{15}{4} s^{-\frac{7}{2}}\Jbr{\mu}^\frac32\rho(\mu)e^{i(\alpha+\beta)}.
\end{equation}
By (\ref{ab2}), one sees that
\begin{equation}\label{eq:In2}
\begin{aligned}
I_2 
={}&  s^{-\frac32}e^{i\Jbr{\mu}^{-1}s}  
\Jbr{\mu}^\frac32\rho(\mu)e^{i\beta} \pt_s^2 e^{i\widetilde\Psi(s,\mu)\log s}\\
&-2 s^{-\frac52}e^{i\Jbr{\mu}^{-1}s}  \langle \mu \rangle^{2} \mu \cdot \nabla_\mu \pt_s
(\Jbr{\mu}^\frac32\rho(\mu)e^{i\beta}  e^{i\widetilde\Psi(s,\mu)\log s})\\
&- s^{-\frac72}e^{i\Jbr{\mu}^{-1}s}  \Jbr{\mu}^{2} \Delta_\mu
(\Jbr{\mu}^\frac32\rho(\mu)e^{i\beta}  e^{i\widetilde\Psi(s,\mu)\log s}) \\
&-3 s^{-\frac72}e^{i\Jbr{\mu}^{-1}s}  \langle \mu \rangle^{2} \mu \cdot \nabla_\mu
(\Jbr{\mu}^\frac32\rho(\mu)e^{i\beta}  e^{i\widetilde\Psi(s,\mu)\log s}) \\
&- s^{-\frac72}e^{i\Jbr{\mu}^{-1}s}  \Jbr{\mu}^{2} 
\sum_{1\le j,k \le 3 }\mu_j \mu_k \pt_j \pt_k (\Jbr{\mu}^\frac32\rho(\mu)e^{i\beta}  e^{i\widetilde\Psi(s,\mu)\log s})\\
=:{}&  J_1 + J_2 + J_3 + J_0 + J_4.
\end{aligned}
\end{equation}
Moreover, since
\begin{align*}
	\pt_t (s^{-\frac32}e^{i\Jbr{\mu}^{-1}s}) 
	&=i\Jbr{\mu} s^{-\frac32}e^{i\Jbr{\mu}^{-1}s}
	-\frac32 s^{-\frac52}e^{i\Jbr{\mu}^{-1}s},\\
	\nabla_x e^{i\Jbr{\mu}^{-1}s}&=-i \mu e^{i\Jbr{\mu}^{-1}s},
\end{align*}
we have
\begin{equation}\label{eq:In4}
\begin{aligned}
I_{4}
&{}=2is^{-\frac52} e^{i\Jbr{\mu}^{-1}s}
\Jbr{\mu}^3(\mu \cdot \nabla_\mu \Jbr{\mu}^\frac32\rho(\mu)e^{i\beta} e^{i\widetilde\Psi(s,\mu)\log s})
\end{aligned}
\end{equation}
and
\begin{equation}\label{eq:In3}
\begin{aligned}
I_{3}
&{}=-2s^{-\frac52} 
\widetilde\Psi(s,\mu) \Jbr{\mu}^\frac52\rho(\mu)e^{i(\alpha+\beta)}  \\
&{}\quad -2s^{-\frac32}(\log s ) 
\pt_s \widetilde\Psi(s,\mu) \Jbr{\mu}^\frac52\rho(\mu)e^{i(\alpha+\beta)} \\
&\quad{}-2i s^{-\frac52} e^{i\Jbr{\mu}^{-1}s}
\Jbr{\mu}^3(\mu \cdot \nabla_\mu (\Jbr{\mu}^\frac32\rho(\mu)e^{i\beta} e^{i\widetilde\Psi(s,\mu)\log s}))\\
&\quad{}-3is^{-\frac72} 
\widetilde\Psi(s,\mu)\Jbr{\mu}^\frac32\rho(\mu)e^{i(\alpha+\beta)} \\
&\quad{}-3is^{-\frac52}(\log s) 
\pt_s \widetilde\Psi(s,\mu) \Jbr{\mu}^\frac32\rho(\mu)e^{i(\alpha+\beta)}\\
&\quad{}+3s^{-\frac72}  e^{i\Jbr{\mu}^{-1}s}
\Jbr{\mu}^{2} ( \mu \cdot \nabla_\mu
(\Jbr{\mu}^\frac32\rho(\mu)e^{i\beta} e^{i\widetilde\Psi(s,\mu)\log s} ))\\
&{}=:-2s^{-\frac52} 
\widetilde\Psi(s,\mu) \Jbr{\mu}^\frac52\rho(\mu) e^{i(\alpha+\beta)} + J_{5}-I_4+J_{6}+J_{7}-J_{0}.
\end{aligned}
\end{equation}
From (\ref{eq:main}), (\ref{eq:In1}), (\ref{eq:In2}), (\ref{eq:In4}) 
and (\ref{eq:In3}) we reach to
\begin{equation}\label{eq:mainest1_pf2}
\begin{aligned}
	&(\Box +1)u_{\mathrm{ap}} - N_{\mathrm{r}}
	\\
	&\quad = \l c_1 s^{-\frac52}\Jbr{\mu}^\frac52 (\widetilde{\rho}(s,\mu)^\frac23 - \rho(\mu)^\frac23)\rho(\mu)
	\Re(e^{i(\alpha+\beta)})\\
	&\qquad + \Re I_1+\sum_{k=1}^7 \Re J_{k}.
\end{aligned}
\end{equation}
To estimate the right hand side of (\ref{eq:mainest1_pf2}), we show several elementary lemmas.

We will estimate the right hand side of  \eqref{eq:mainest1_pf2} in $L^2_x(|x|<t)$.
Thanks to \eqref{eq:change} and Lemma \ref{lem:mainest1_pf1}, we have
\begin{equation}\label{eq:InS}
	\norm{\l c_1 s^{-\frac52} \Jbr{\mu}^\frac52 (\widetilde\rho(s,\mu)^\frac23 - \rho(\mu)^\frac23) \rho(\mu) e^{i(\alpha+\beta)}}_{L^2_x(|x|<t)}
	\lesssim t^{-1-\frac56}.
\end{equation}
Furthermore, it follows from Lemma \ref{lem:mainest1_pf2} that
\begin{equation}
	\norm{\Re(J_5+J_7)}_{L^2_x(|x|<t)} \lesssim t^{-1-\frac56}\log t.
	\label{eq:j5j7}
\end{equation}
By \eqref{eq:change} and \eqref{eq:In1}, we obtain
\begin{equation}
	\norm{\Re I_1}_{L^2(|x|<t)} \lesssim t^{-2} \tnorm{\Jbr\cdot^{-1} \rho}_{L^2_\mu(\R^3)}
	\le t^{-2} \norm{\rho}_{L^2_\mu}.
	\label{eq:i1}
\end{equation}
Similarly,
\begin{equation}\label{eq:j6}
\begin{aligned}
	\norm{\Re J_6}_{L^2(|x|<t)}
	&{}\lesssim t^{-2} (\tnorm{\Jbr{\cdot}^{-\frac35} 
	\rho}^{\frac53}_{L^{\frac{10}{3}}_\mu(\R^3)}
	+ \tnorm{\Jbr{\cdot}^{-2} \rho}_{L^2_\mu(\R^3)})\\
	&{}\le t^{-2} \norm{\rho}_{H^1}\Jbr{\norm{\rho}_{H^1}}^{\frac23}.
\end{aligned}
\end{equation}

To estimate $J_1$, we note that
\begin{align*}
	&\pt_s^2  e^{i\widetilde\Psi(s,\mu)\log s}\\
	&= i\pt_s^2 \widetilde\Psi(s,\mu) \log s  e^{i \widetilde\Psi(s,\mu)\log s}
	+ 2is^{-1}\pt_s \widetilde\Psi(s,\mu)  e^{i \widetilde\Psi(s,\mu)\log s}\\
	 &\quad - (\pt_s \widetilde\Psi(s,\mu) \log s)^2 e^{i \widetilde\Psi(s,\mu)\log s}
	-2s^{-1} \widetilde\Psi(s,\mu)  \pt_s \widetilde\Psi(s,\mu) \log s  e^{i \widetilde\Psi(s,\mu)\log s}\\
	 &\quad -is^{-2} \widetilde\Psi(s,\mu) e^{i \widetilde\Psi(s,\mu)\log s}
	-s^{-2} \widetilde\Psi(s,\mu)^2 e^{i \widetilde\Psi(s,\mu)\log s}.
\end{align*}
Then, one sees from Lemma \ref{lem:mainest1_pf2} that
\begin{align*}
	\norm{J_1}_{L^2(|x|<t)} \lesssim{}& t^{-2-\frac56}\log t + t^{-2-\frac56}
	+ t^{-2-\frac76}(\log t)^2 \\
	&+ t^{-3}(\log t) \tnorm{\Jbr{\cdot}^{-4}\rho^{\frac13}}_{L^2_\mu(\R^3)}
	+t^{-2} \tnorm{\Jbr{\cdot}^{-1}\widetilde\rho(\cdot,t)^{\frac23}\rho}_{L^2_\mu(\R^3)}\\
	&+t^{-2} \tnorm{\Jbr{\cdot}^{-1}\widetilde\rho(\cdot,t)^{\frac43}\rho}_{L^2_\mu(\R^3)}.
\end{align*}
Using \eqref{eq:trho_est} and the H\"older and the Sobolev inequalities, we obtain
\begin{equation}\label{eq:j1}
\norm{J_1}_{L^2(|x|<t)}
\lesssim t^{-2} \Jbr{\norm{\rho}_{H^1}}^{\frac73}.
\end{equation}

We next estimate $J_3$ and $J_4$. To this end, we remark that
\begin{align*}
	&\pt_j \pt_k (\Jbr{\mu}^\frac32 \rho(\mu)e^{i\beta}  e^{i \widetilde\Psi(s,\mu)\log s})\\
	&{}=(\pt_j \pt_k (\Jbr{\mu}^\frac32 \rho(\mu)e^{i\beta} ))e^{i \widetilde\Psi(s,\mu)\log s}\\ 
	&\quad + i (\pt_j(\Jbr{\mu}^\frac32 \rho(\mu)e^{i\beta} ))  \pt_k \widetilde\Psi(s,\mu)(\log s) e^{i\widetilde\Psi(s,\mu)\log s} \\
	&\quad + i (\pt_k(\Jbr{\mu}^\frac32 \rho(\mu)e^{i\beta} ))  \pt_j \widetilde\Psi(s,\mu)(\log s) e^{i\widetilde\Psi(s,\mu)\log s} \\
	&\quad +i \Jbr{\mu}^\frac32 \rho(\mu)e^{i\beta}  \pt_j \pt_k \widetilde\Psi(s,\mu)(\log s) e^{i \widetilde\Psi(s,\mu)\log s}\\
	&\quad - \Jbr{\mu}^\frac32 \rho(\mu)e^{i\beta}  \pt_j \widetilde\Psi(s,\mu)\pt_k \widetilde\Psi(s,\mu)(\log s)^2 e^{i\widetilde\Psi(s,\mu)\log s}.
\end{align*}
Hence, it follows from Lemma \ref{lem:mainest1_pf3} that
\begin{align*}
	&\norm{J_3+J_4}_{L^2_x(|x|<t)}\\
	&\lesssim t^{-2} \(\norm{\Jbr{\cdot}^{\frac32}\nabla^2(\Jbr{\cdot}^\frac32 \rho e^{i\beta} )}_{L^2_\mu} +	
	(\log t) \norm{\Jbr{\cdot}^3 \rho^\frac23 |\nabla^2 \rho|}_{L^2_\mu} \)\\
	&\quad +t^{-1-\frac56}(\log t)\( \norm{\Jbr{\cdot}^{\frac32}|\nabla (\Jbr{\cdot}^\frac32 \rho e^{i\beta} )||\Jbr{\cdot}^\frac12\nabla \rho|}_{L^2_\mu} + \norm{\Jbr{\mu}^{\frac72} |\nabla \rho|^2}_{L^2_\mu} \)\\
	&\quad+t^{-\frac{7}3}(\log t )\(\norm{\Jbr{\cdot}^{-\frac12}|\nabla (\Jbr{\cdot}^\frac32 \rho e^{i\beta} )|}_{L^2_\mu}
	+ \norm{\rho}_{L^2}\)\\	
	&\quad + t^{-2} (\log t)^2 \norm{\Jbr{\cdot}^3 |\nabla \rho|^2}_{L^2_\mu}
	+ t^{-3} (\log t)^2 \norm{\Jbr{\cdot}^{-2} \rho^\frac13}_{L^2_\mu}.
\end{align*}
Thus,
\begin{equation}
	\norm{\Re(J_3+J_4)}_{L^2_x(|x|<t)} \lesssim t^{-1-\frac56}(\log t)\JBR{\norm{\Jbr{\cdot}^3 \rho e^{i\beta}}_{H^2}
	+\norm{\Jbr{\cdot}^3 \rho}_{H^2}}^2.
	\label{eq:j2j3}
\end{equation}

Finally, we estimate $J_2$.
Since
\begin{align*}
&\pt_{\mu_j} \pt_s
(\Jbr{\mu}^\frac32\rho(\mu)e^{i\beta}  e^{i \widetilde\Psi(s,\mu)\log s})\\
&= (\pt_{\mu_j}( \Jbr{\mu}^\frac32\rho(\mu)e^{i\beta}  )) (i\pt_s \widetilde\Psi(s,\mu) \log s + is^{-1} \widetilde\Psi(s,\mu)) e^{i \widetilde\Psi(s,\mu)\log s}\\
&\quad + \Jbr{\mu}^\frac32\rho(\mu)e^{i\beta}  (i\pt_{\mu_j}\pt_s \widetilde\Psi(s,\mu) \log s + is^{-1} \pt_{\mu_j}\widetilde\Psi(s,\mu)) e^{i \widetilde\Psi(s,\mu)\log s}\\
&\quad - \Jbr{\mu}^\frac32\rho(\mu)e^{i\beta}  (\pt_s \widetilde\Psi(s,\mu) \log s + s^{-1} \widetilde\Psi(s,\mu))\pt_{\mu_j} \widetilde\Psi(s,\mu)(\log s) e^{i \widetilde\Psi(s,\mu)\log s},
\end{align*}
we deduce from Lemmas \ref{lem:mainest1_pf2} and \ref{lem:mainest1_pf3} that
\begin{align*}
	&\norm{J_2}_{L^2_x(|x|<t)}\\
	&\lesssim
	 t^{-1-\frac56} \norm{\Jbr{\cdot}^{5/2}\rho |\nabla \rho| }_{L^2_\mu}
	+t^{-2}(\log t) \norm{ \Jbr{\cdot}^{2} \rho^\frac43 |\nabla \rho| }_{L^2_\mu}\\
	&\quad 
	+t^{-2} \norm{\Jbr{\cdot}^{1/2}|\nabla(\Jbr{\cdot}^{3/2} \rho e^{i\beta})|\rho^\frac23}_{L^2_\mu}
	+t^{-\frac{13}6} (\log t )^2\norm{ \Jbr{\cdot}^{\frac32} \rho |\nabla \rho| }_{L^2_\mu}
	\\
	&\quad +t^{-\frac73} (\log t) (\norm{\rho}_{L^2_\mu}+ \norm{\Jbr{\cdot}|\nabla \rho|}_{L^2_\mu}+
	\tnorm{\Jbr{\cdot}^{-1/2}\nabla(\Jbr{\cdot}^{3/2} \rho e^{i\beta})}_{L^2_\mu}) \\
	&\quad +t^{-\frac73}\(\norm{\rho}_{L^2_\mu}+ \norm{\Jbr{\cdot}^{-1/2}|\nabla(\Jbr{\cdot}^{3/2} \rho e^{i\beta})|}_{L^2_\mu}\)+ t^{-\frac83}(\log t)^2 \norm{\Jbr{\cdot}^{-1}\rho}_{L^2_\mu},
\end{align*}
from which we obtain
\begin{equation}\label{eq:j4}
	\norm{J_2}_{L^2_x(|x|<t)} \lesssim t^{-1-\frac56} \JBR{\norm{\Jbr{\cdot}^3 \rho e^{i\beta}}_{H^2}
	+\norm{\Jbr{\cdot}^3 \rho}_{H^2}}^2.
\end{equation}
Substituting (\ref{eq:InS}), (\ref{eq:j5j7}), (\ref{eq:i1}), 
(\ref{eq:j6}), (\ref{eq:j1}), (\ref{eq:j2j3}), (\ref{eq:j4}) 
into (\ref{eq:mainest1_pf2}), we obtain (\ref{eq:1st})
because $\tnorm{\Jbr{\cdot}^3 \rho e^{i\beta}}_{H^2}
	+\tnorm{\Jbr{\cdot}^3 \rho}_{H^2} \lesssim \norm{(\phi_0,\phi_1)}_{Y}$.
This completes the proof of Proposition \ref{prop:mainest1}. 
\end{proof}

Next we give an estimate for difference between 
$(\Box+1)\widetilde{v}_{\mathrm{ap}}$ and the non-resonance part 
$N_{\mathrm{nr}}$. 

\begin{proposition}\label{prop:mainest2}
For all $n\ge2$ and $t\ge 3$, we have 
\begin{equation}
	\norm{(\Box+1)v_n - N_n}_{L^2_x(|x|< t)} \lesssim |c_n| t^{-2} \JBR{\norm{(\phi_0,\phi_1)}_Y}^{\frac73}.
	\label{eq:2nd}
\end{equation}
In particular,
\[
	\norm{(\Box+1)\widetilde{v}_{\mathrm{ap}} - N_{\mathrm{nr}}}_{L^2_x(|x|< t)} \lesssim t^{-2} \JBR{\norm{(\phi_0,\phi_1)}_Y}^{\frac73} .
\]
\end{proposition}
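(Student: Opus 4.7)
The proof parallels Proposition \ref{prop:mainest1}. The plan is to write
\[
v_n = -\frac{\l c_n}{n^2-1}\Re\bigl(f_n(s,\mu)\, h_n(s,\mu)\bigr),
\]
with $f_n(s,\mu) := s^{-5/2}e^{in\Jbr{\mu}^{-1}s}$ carrying the linear oscillation and $h_n(s,\mu) := \Jbr{\mu}^{5/2}\rho(\mu)\widetilde\rho(s,\mu)^{2/3}e^{in(\widetilde\Psi(s,\mu)\log s + \beta(\mu))}$ carrying the amplitude and the phase correction. Expand via Leibniz:
\[
(\Box+1)(f_nh_n) = ((\Box+1)f_n)h_n + f_n\Box h_n + 2\pt_t f_n\pt_t h_n - 2\nabla_x f_n\cdot\nabla_x h_n.
\]

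By Lemma \ref{lem:mainest0} with $m=5/2$ and $d=3$, the leading piece of $((\Box+1)f_n)h_n$ equals $-(n^2-1)f_nh_n$; combined with the prefactor $-\l c_n/(n^2-1)$, this reproduces $\l c_n s^{-5/2}\Jbr{\mu}^{5/2}\rho\widetilde\rho^{2/3}\Re e^{in(\alpha+\beta)}$, which differs from $N_n$ only by $\widetilde\rho^{2/3}$ in place of $\rho^{2/3}$. The resulting remainder
\[
\l c_n s^{-5/2}\Jbr{\mu}^{5/2}\rho(\widetilde\rho^{2/3}-\rho^{2/3})\Re e^{in(\alpha+\beta)}
\]
is controlled in $L^2_x(|x|<t)$ via Lemma \ref{lem:mainest1_pf1} together with \eqref{eq:change}. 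All other terms --- the two subleading contributions from $(\Box+1)f_n$, plus $f_n\Box h_n$, $2\pt_tf_n\pt_th_n$, and $-2\nabla_x f_n\cdot\nabla_x h_n$ --- are handled using the expressions \eqref{ptt}--\eqref{ab2} for $\pt_t$, $\nabla_x$, $\Box$ in $(s,\mu)$ coordinates, combined with Lemmas \ref{lem:mainest1_pf2} and \ref{lem:mainest1_pf3} to bound derivatives of $\widetilde\Psi$ and with \eqref{eq:trho_est} for derivatives of $\widetilde\rho^{2/3}$. After passing to $L^2_x$ via \eqref{eq:change}, the weighted norms of $\rho$ that appear are absorbed into $\JBR{\norm{(\phi_0,\phi_1)}_Y}^{7/3}$. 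The summed bound over $n\ge 2$ then converges thanks to $|c_n| = O(n^{-8/3})$ from Lemma \ref{lem:MMU}.

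The main obstacle is controlling the $n$-dependence introduced by differentiating the oscillatory factor $e^{in(\alpha+\beta)}$. Each such derivative produces a factor $n$, so the worst Leibniz terms --- those containing two such derivatives, e.g.\ in $\pt_tf_n\pt_t h_n$, in $\nabla_xf_n\cdot\nabla_xh_n$, or in $\pt_s^2$ applied to $e^{in\widetilde\Psi\log s}$ inside $\Box h_n$ --- grow like $n^2$. Crucially, the prefactor $(n^2-1)^{-1}$ in the definition of $v_n$ is designed to neutralize precisely this growth, leaving each piece bounded by $|c_n|$ times the claimed $t^{-2}$-decay (up to logarithmic factors absorbed by the decay gap), uniformly in $n$. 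Without this compensating matching between the prefactor and the order of the resonance, summability of $\sum_n\|(\Box+1)v_n-N_n\|_{L^2_x}$ via Lemma \ref{lem:MMU} would fail.
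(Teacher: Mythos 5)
Your proposal mirrors the paper's proof step for step: the same factorization of $v_n$ into the oscillatory factor $s^{-5/2}e^{in\Jbr{\mu}^{-1}s}$ and the amplitude $\Jbr{\mu}^{5/2}\widetilde\rho^{2/3}\rho\,e^{in(\widetilde\Psi\log s+\beta)}$, the same Leibniz expansion of $\Box+1$, the same invocations of Lemmas~\ref{lem:mainest0}, \ref{lem:mainest1_pf1}, \ref{lem:mainest1_pf2}, \ref{lem:mainest1_pf3}, and the same key observation that the $(n^2-1)^{-1}$ prefactor cancels the $n^2$-growth coming from differentiating the phase at most twice, with $|c_n|=O(n^{-8/3})$ supplying summability. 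One small point, shared with the paper's own text: the term carrying $\widetilde\rho^{2/3}-\rho^{2/3}$ only yields $O(|c_n|\,t^{-11/6})$ in $L^2_x$ (this is what the paper's proof actually computes from Lemma~\ref{lem:mainest1_pf1}), slightly weaker than the $t^{-2}$ asserted in the proposition's statement, though this is harmless in the sequel since the application only requires decay at rate $t^{-1-\gamma}$ for some $\gamma<5/6$.
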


\begin{proof}
Denoting $d_n=-\l c_n/(n^2-1)$, we have
\[
(\Box+1)v_n
=d_n \Re (\Box+1)(s^{-\frac52}\Jbr{\mu}^{\frac52}\widetilde\rho(s,\mu)^{\frac23}\rho(\mu)e^{in\beta}
e^{in\langle\mu\rangle^{-1}s+in \widetilde\Psi(\mu,t)\log s}).
\]
As in the previous case, we split 
\begin{equation}\label{eq:In100}
\begin{aligned}
&d_n(\Box+1)(s^{-\frac52}\Jbr{\mu}^{\frac52} \widetilde\rho(s,\mu)^{\frac23}\rho(\mu)e^{in\beta}
e^{in\Jbr{\mu}^{-1}s+in\widetilde\Psi(s,\mu)\log s})\\
&=d_n((\Box+1) s^{-\frac52}e^{in\Jbr{\mu}^{-1}s})
\Jbr{\mu}^\frac52\widetilde\rho (s,\mu)^{\frac23}\rho(\mu)e^{in\beta} e^{in\widetilde\Psi(s,\mu)\log s}\\
&\quad + d_n s^{-\frac52}e^{in\Jbr{\mu}^{-1}s} 
\Box(\Jbr{\mu}^\frac52\widetilde\rho (s,\mu)^{\frac23}\rho(\mu)e^{in\beta} e^{in\widetilde\Psi(s,\mu)\log s})\\
&\quad + 2d_n\pt_t ( s^{-\frac52}e^{in\Jbr{\mu}^{-1}s})
\pt_t(\Jbr{\mu}^\frac52\widetilde\rho (s,\mu)^{\frac23}\rho(\mu)e^{in\beta} e^{in\widetilde\Psi(s,\mu)\log s})\\
&\quad-2d_n\nabla_x ( s^{-\frac52}e^{in\Jbr{\mu}^{-1}s}) \cdot 
\nabla_x(\Jbr{\mu}^\frac52\widetilde\rho (s,\mu)^{\frac23}\rho(\mu)e^{in\beta} e^{in\widetilde\Psi(s,\mu)\log s})\\
&=: I_{1,n}+ I_{2,n} + I_{3,n} + I_{4,n}.
\end{aligned}
\end{equation}
By means of Lemma \ref{lem:mainest0},
\begin{equation}\label{eq:mainest1_pf21}
\begin{aligned}
	I_{1,n} &= 
	\l c_n s^{-\frac52} 
	\Jbr{\mu}^\frac52\widetilde\rho (s,\mu)^{\frac23}\rho(\mu) e^{in(\alpha+\beta)}\\
	&\quad -2in d_n s^{-\frac72} 
	\Jbr{\mu}^\frac72 \widetilde\rho (s,\mu)^{\frac23}\rho(\mu) e^{in(\alpha+\beta)}\\
	&\quad +\frac{35d_n}{4} s^{-\frac{9}{2}}
	\Jbr{\mu}^\frac52\widetilde\rho (s,\mu)^{\frac23}\rho(\mu) e^{in(\alpha+\beta)}\\
	&=:\l c_n s^{-\frac52}
	\Jbr{\mu}^\frac52\widetilde\rho (s,\mu)^{\frac23}\rho(\mu) e^{in(\alpha+\beta)}
	+K_{1,n} + K_{2,n}.
\end{aligned}
\end{equation}
In a similar way, we have
\begin{equation}\label{eq:mainest1_pf22}
\begin{aligned}
I_{2,n} 
={}& d_n s^{-\frac52}e^{in\Jbr{\mu}^{-1}s}  
\Jbr{\mu}^{\frac52}\rho(\mu)e^{in\beta} \pt_s^2(\widetilde\rho(s,\mu)^{\frac23}e^{in\widetilde\Psi(s,\mu)\log s})\\
&-2 d_n s^{-\frac72}e^{in\Jbr{\mu}^{-1}s}  \langle \mu \rangle^{2}
\mu \cdot \nabla_\mu \pt_s
(\Jbr{\mu}^{\frac52} \widetilde\rho(s,\mu)^{\frac23}\rho(\mu)e^{in\beta}  e^{in\widetilde\Psi(s,\mu)\log s})\\
&- d_ns^{-\frac92}e^{in\Jbr{\mu}^{-1}s}  \Jbr{\mu}^{2} \Delta_\mu
(\Jbr{\mu}^{\frac52} \widetilde\rho(s,\mu)^{\frac23}\rho(\mu)e^{in\beta}  e^{in\widetilde\Psi(s,\mu)\log s}) \\
&-3 d_ns^{-\frac92}e^{in\Jbr{\mu}^{-1}s}  \langle \mu \rangle^{2} \mu \cdot \nabla_\mu
(\Jbr{\mu}^{\frac52} \widetilde\rho(s,\mu)^{\frac23}\rho(\mu)e^{in\beta} e^{in\widetilde\Psi(s,\mu)\log s}) \\
&- d_n s^{-\frac92}e^{in\Jbr{\mu}^{-1}s}  \Jbr{\mu}^{2}
\sum_{1\le i,j \le 3 }\mu_i \mu_j \pt_i \pt_j (
\Jbr{\mu}^{\frac52} \widetilde\rho(s,\mu)^{\frac23}\rho(\mu)e^{in\beta}  e^{in\widetilde\Psi(s,\mu)\log s})\\
=:{}&  J_{1,n} + J_{2,n} + J_{3,n} - \frac32 K_{3,n}+ J_{4,n}.
\end{aligned}
\end{equation}
By using the identities
\begin{align*}
	\pt_t (s^{-\frac52}e^{in\Jbr{\mu}^{-1}s}) 
	&=in\Jbr{\mu} s^{-\frac52}e^{in\Jbr{\mu}^{-1}s}
	-\frac52 s^{-\frac72}e^{in\Jbr{\mu}^{-1}s},\\
	\nabla_x e^{in\Jbr{\mu}^{-1}s}&= -i n\mu e^{in\Jbr{\mu}^{-1}s},
\end{align*}
we obtain
\begin{equation}\label{eq:mainest1_pf24}
\begin{aligned}
I_{4,n}
&{}=2ind_ns^{-\frac52}e^{in\Jbr{\mu}^{-1}s} \\
&{}\quad\quad \times \mu \cdot \{ s^{-1}\langle \mu \rangle \nabla_\mu
+s^{-1}\langle \mu \rangle\mu (\mu \cdot \nabla_\mu)\}
\\&{}\quad\quad\quad\quad
(\Jbr{\mu}^{\frac52} \widetilde\rho(s,\mu)^{\frac23}\rho(\mu)e^{in\beta}e^{in\widetilde\Psi(s,\mu)\log s} )\\
&{}=2ind_ns^{-\frac72} e^{in\Jbr{\mu}^{-1}s}
\Jbr{\mu}^3\\
&{}\quad\quad \times(\mu \cdot \nabla_\mu (\Jbr{\mu}^{\frac52} \widetilde\rho(s,\mu)^{\frac23}\rho(\mu)e^{in\beta} e^{in\widetilde\Psi(s,\mu)\log s}))
\end{aligned}
\end{equation}
and
\begin{equation}\label{eq:mainest1_pf23}
\begin{aligned}
I_{3,n}
&{}=-2n^2d_ns^{-\frac72}\Jbr{\mu} e^{in\Jbr{\mu}^{-1}s}
\widetilde\Psi(s,\mu) \Jbr{\mu}^{\frac52} \widetilde\rho(s,\mu)^{\frac23}\rho(\mu)e^{in\beta} e^{in\widetilde\Psi(s,\mu)\log s} \\
&{}\quad -2n^2d_ns^{-\frac52}(\log s )\Jbr{\mu} e^{in\Jbr{\mu}^{-1}s}\\
&\quad\qquad\times
\pt_s \widetilde\Psi(s,\mu) \Jbr{\mu}^{\frac52} \widetilde\rho(s,\mu)^{\frac23}\rho(\mu)e^{in\beta} e^{i\Psi(s,\mu)\log s} \\
&{}\quad +
2ind_ns^{-\frac52}\Jbr{\mu} e^{in\Jbr{\mu}^{-1}s}
\Jbr{\mu}^{\frac52} (\pt_s \widetilde\rho(s,\mu)^{\frac23})\rho(\mu)e^{in\beta} e^{in\widetilde\Psi(s,\mu)\log s} \\
&\quad{}-2i nd_ns^{-\frac72} e^{in\Jbr{\mu}^{-1}s}\\
&\quad\qquad\times
\Jbr{\mu}^3(\mu \cdot \nabla_\mu 
(\Jbr{\mu}^{\frac52} \widetilde\rho(s,\mu)^{\frac23}\rho(\mu)e^{in\beta} e^{in\widetilde\Psi(s,\mu)\log s}))\\
&\quad{}-5ind_ns^{-\frac92}  e^{in\Jbr{\mu}^{-1}s} \widetilde\Psi(s,\mu)
\Jbr{\mu}^{\frac52} \widetilde\rho(s,\mu)^{\frac23}\rho(\mu)e^{in\beta} e^{in\widetilde\Psi(s,\mu)\log s} \\
&\quad{}-5ind_ns^{-\frac72}(\log s) e^{in\Jbr{\mu}^{-1}s}\\
&\quad\qquad\times
\pt_s \widetilde\Psi(s,\mu) \Jbr{\mu}^{\frac52} \widetilde\rho(s,\mu)^{\frac23}\rho(\mu)e^{in\beta} e^{in\widetilde\Psi(s,\mu)\log s}\\
&\quad{}-5d_ns^{-\frac72}e^{in\Jbr{\mu}^{-1}s}
\Jbr{\mu}^{\frac52} (\pt_s \widetilde\rho(s,\mu)^{\frac23})\rho(\mu)e^{in\beta} e^{in\widetilde\Psi(s,\mu)\log s}\\
&\quad{}
+5d_ns^{-\frac92}  e^{in\Jbr{\mu}^{-1}s}\Jbr{\mu}^{2} ( \mu \cdot \nabla_\mu
(\Jbr{\mu}^{\frac52} \widetilde\rho(s,\mu)^{\frac23}\rho(\mu)e^{in\beta} e^{in\widetilde\Psi(s,\mu)\log s} ))\\
&{}=:K_{4,n} + J_{5,n}+K_{5,n}-I_{4,n}+J_{6,n}+J_{7,n}+K_{6,n}+\frac52 K_{3,n}.
\end{aligned}
\end{equation}
Substituting
(\ref{eq:mainest1_pf21}), (\ref{eq:mainest1_pf22}), 
(\ref{eq:mainest1_pf24}) and (\ref{eq:mainest1_pf23}) into (\ref{eq:In100}), 
we conclude that
\begin{align}
	(\Box +1)v_n - N_n\label{eq:mainest1_pf5}
	={}&\lambda c_{n}s^{-\frac52}\Jbr{\mu}^\frac52
	\left(\widetilde\rho(s,\mu)^{\frac23}-\rho(\mu)^{\frac23}\right)\rho(\mu)\Re
	(e^{in(\alpha+\beta)})\\
	& +\sum_{k=1}^7 \Re J_{k,n} + \sum_{k=1}^6 \Re K_{k,n}.\nonumber
\end{align}
The first term of the right hand side is $O(|c_n|t^{-1-5/6})$ in $L^2_x(|x|<t)$ with the help of \eqref{eq:change} and Lemma \ref{lem:mainest1_pf1}.
The estimates for $J_{k,n}$ are similar to those for the corresponding $J_{k}$.
The difference are that the additional decay effect of order $O(t^{-1})$ make them all higher order terms,
that each term is multiplied by $\Jbr{\mu} \widetilde\rho(s,\mu)^{2/3}$,
and that the order in $n$ is at most $O(n^2|d_n|)=O(|c_n|)=O(n^{-8/3})$ as $n\to\I$ because
the phase parts are differentiated at most twice.
The terms $K_{k,n}$ are new but
the estimates for $K_{k,n}$ are done in a similar way.
This completes the proof of Proposition \ref{prop:mainest2}.
\end{proof}

\vskip3mm
\noindent {\bf Acknowledgments.} 
S.M. is partially supported by the Sumitomo Foundation, Basic Science Research
Projects No.\ 161145. 
J.S. is partially supported by JSPS, 
Grant-in-Aid for Young Scientists (A) 25707004.

\end{document}